\theoremstyle{plain}
\newtheorem{thm}{Theorem}[section]
\newtheorem{prop}[thm]{Proposition}
\newtheorem{cor}[thm]{Corollary}
\newtheorem{lem}[thm]{Lemma}
\theoremstyle{definition}
\newtheorem{defn}[thm]{Definition}
\newtheorem*{que}{Question}
\theoremstyle{remark}
\newtheorem{rem}[thm]{Remark}
\theoremstyle{plain}
\newcommand{\R}{\mathbb{R}}
\newcommand{\dimn}{\mathrm{dim}}
\newcommand{\identity}{\mathrm{id}}
\newcommand{\scal}{\mathrm{scal}}
\newcommand{\ric}{\mathrm{Ric}}
\newcommand{\trace}{\mathrm{tr}}
\newcommand{\kernel}{\mathrm{ker}}
\newcommand{\volume}{\mathrm{vol}}
\newcommand{\dv}{\text{ }dV}
\newcommand{\Diff}{\mathrm{Diff}}
\newcommand{\brk}{\text{ }}
\newcommand{\dbrk}{\text{ }\text{ }}
\newcommand{\Biggmid}{\hspace{1mm}\Bigg\vert\hspace{1mm}}
\renewcommand{\title}[1]{{\bfseries #1}\par}
\renewcommand{\author}[1]{\medskip{#1}\par\smallskip}
\newcommand{\affiliation}[1]{{\itshape #1}\par}
\newcommand{\email}[1]{E-mail:~\texttt{#1}\par}
\numberwithin{equation}{section}
\begin{document}
\begin{center}
\title{\LARGE On the stability of Einstein manifolds}
\vspace{3mm}
\author{\Large Klaus Kröncke}
\vspace{3mm}
\affiliation{Universität Potsdam, Institut für Mathematik\\Am Neuen Palais 10\\14469 Potsdam, Germany} 
\email{klaus.kroencke@uni-potsdam.de} 
\end{center}
\vspace{2mm}
\begin{abstract}Certain curvature conditions for stability of Einstein manifolds with respect to the Einstein-Hilbert action are given.
 These conditions are given in terms of quantities involving the Weyl tensor and the Bochner tensor.
In dimension six, a stability criterion involving the Euler characteristic is given.
% This notion of stability is closely related to the concept of physical stability, which comes from higher dimensional gravity theories. We also give curvature conditions for physical stability.
\end{abstract}
 \section{Introduction}
Let $M^n$ be a compact manifold of dimension $n\geq3$ and let $\mathcal{M}$ be the set of smooth Riemannian metrics on it. For any $c>0$, let $\mathcal{M}_c\subset\mathcal{M}$ be the subset of smooth Riemannian metrics of volume $c$. 
Ricci-flat metrics can be variationally characterized as critical points of the Einstein-Hilbert action
\begin{align*}S\colon\mathcal{M}&\to\R,\\
              g&\mapsto \int_M\scal_g\dv_g
\end{align*}
\cite{Hil15}. 
If the functional is restricted to some $\mathcal{M}_c$, the critical points are precisely the Einstein metrics of volume $c$. It is well-known that Einstein manifolds are neither local maximum nor minimum of the Einstein-Hilbert action on $\mathcal{M}_c$ \cite{Mut74}. 
In fact, both index and coindex of $S''$ are infinite on any Einstein space. However, there is a notion of stability which is as follows:
We say that an Einstein manifold is stable if $S''(h)\leq0$ for any $h\in \Gamma(S^2M)$ satisfying $\trace h=0$ and $\delta h=0$. Such tensors are called transverse traceless. We call the manifold strictly stable, if $S''(h)<0$ for all nonzero transverse tracless tensors.

Stability of compact Einstein metrics appears in mathematical general relativity. In \cite{AMo11}, Andersson and Moncrief
prove that the Lorentzian cone over a compact negative Einstein metric is an attractor of the Einstein flow under the assumption that the compact
Einstein metric is stable. 
Stability also appears in the context of the Ricci flow and its analysis close to Einstein metrics \cite{Ye93,CHI04,Ses06,Has12,CH13}. 
This is because the second variational formulas of Perelman's entropies on Einstein metrics are closely related to the second variational formula of the Einstein Hilbert action.

Many classes of Einstein spaces are known to be stable.
Most symmetric spaces of compact type (including the sphere and the complex projective space) are stable \cite{Koi80,CH13}. Spin manifolds admitting a nonzero parallel spinor are stable \cite{Wan91,DWW05}. K\"ahler-Einstein manifolds of nonpositive scalar curvature are stable 
\cite{DWW07}, which essentially follows from the work in \cite{Koi83}.

On the other hand, many unstable Einstein manifolds can be explicitly constructed \cite{PP84b,PP84a,GM02,GH02,GHP03,Boe05}. All these examples are of positive scalar curvature. No unstable Einstein manifolds of nonpositive scalar curvature are known which naturally leads to the 
following
\begin{que}[{{\cite[p.\ 65]{Dai07}}}]Are all compact Einstein manifolds with nonpositive scalar curvature stable?
 \end{que}
For the Ricci-flat case, this question was already asked by Kazdan and Warner \cite[p.\ 315]{KW75}.
 The statement is not true in the noncompact case since the Riemannian Schwarzschild metric \index{Riemannian Schwarzschild metric}is unstable (see \cite[Sec. 5]{GPY82}).

 Throughout this work, any manifold $M^n$ is compact and $n\geq3$ unless the contrary is explicitly asserted.
For the study of curvature conditions, we build up on an important theorem by Koiso, which states the following:
 \begin{thm}[{{\cite[Theorem 3.3]{Koi78}}}]\label{koiso}Let $(M,g)$ be an Einstein manifold with Einstein constant $\mu$. If the function $r$ satisfies
\begin{align*}
\sup_{p\in M}r(p)\leq \max\left\{-\mu,\frac{1}{2}\mu\right\},
 \end{align*}
 then $(M,g)$ is stable. If the strict inequality holds, then $(M,g)$ is strictly stable.
 \end{thm}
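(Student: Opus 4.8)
The plan is to turn the stability statement into a single integral inequality for transverse traceless tensors and then control it by a Bochner estimate. First I would recall the standard reduction: on an Einstein manifold, along a transverse traceless $h\in\Gamma(S^2M)$ (so $\trace h=0$ and $\delta h=0$), the second variation is, up to a positive dimensional constant,
\[ S''(h)=-\tfrac12\int_M\langle\nabla^*\nabla h-2\mathcal{R} h,\,h\rangle\dv, \]
where $\mathcal{R}$ is the curvature endomorphism of symmetric $2$-tensors occurring in the Einstein operator, normalized so that $r(p)$ is its largest eigenvalue on trace-free tensors at $p$; thus $\langle\mathcal{R} h,h\rangle\le r(p)\,|h|^2$ pointwise. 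Hence $(M,g)$ is stable exactly when $\nabla^*\nabla-2\mathcal{R}\ge0$ on such $h$, i.e. when
\[ \int_M|\nabla h|^2\dv\ \ge\ 2\int_M\langle\mathcal{R} h,h\rangle\dv, \]
and strictly stable when this is strict for $h\neq0$.

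The heart of the matter is a Weitzenböck identity for $I:=\int_M\sum_{i,j,k}\nabla_i h_{jk}\,\nabla_j h_{ik}\dv$. Integrating by parts, the condition $\delta h=0$ annihilates one of the resulting terms, and the Ricci identity together with $\ric=\mu g$ converts the surviving commutator into curvature, which I expect to yield
\[ I=\int_M\big(\langle\mathcal{R} h,h\rangle-\mu\,|h|^2\big)\dv. \]
Next I would decompose the $3$-tensor $\nabla h$ (already symmetric in its last two slots) orthogonally into its totally symmetric part $S$ and its complementary mixed-symmetry part, so that $\int_M|S|^2\dv=\tfrac13\int_M|\nabla h|^2\dv+\tfrac23 I$ while the complementary part contributes $\int_M|\nabla h|^2\dv-I$. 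The nonnegativity of each of the two pieces then gives two independent lower bounds for the Dirichlet energy:
\[ \int_M|\nabla h|^2\dv\ \ge\ 2\mu\int_M|h|^2\dv-2\int_M\langle\mathcal{R} h,h\rangle\dv, \]
from $\int_M|S|^2\dv\ge0$, and
\[ \int_M|\nabla h|^2\dv\ \ge\ \int_M\langle\mathcal{R} h,h\rangle\dv-\mu\int_M|h|^2\dv, \]
from the nonnegativity of the complementary (curl) part.

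To conclude I would feed the appropriate bound into the stability inequality and apply $\langle\mathcal{R} h,h\rangle\le(\sup_p r)\,|h|^2$. In the case $\mu\ge0$, where $\max\{-\mu,\tfrac12\mu\}=\tfrac12\mu$, the first bound gives
\[ \int_M\big(|\nabla h|^2-2\langle\mathcal{R} h,h\rangle\big)\dv\ \ge\ \big(2\mu-4\sup_{p}r\big)\int_M|h|^2\dv\ \ge\ 0 \]
whenever $\sup_p r\le\tfrac12\mu$; in the case $\mu\le0$, where the maximum is $-\mu$, the second bound gives the same quantity $\ge(-\mu-\sup_p r)\int_M|h|^2\dv\ge0$ whenever $\sup_p r\le-\mu$. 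Taking the better of the two is precisely the hypothesis $\sup_p r\le\max\{-\mu,\tfrac12\mu\}$. If the inequality is strict the relevant coefficient is strictly positive, so $S''(h)<0$ for $h\neq0$, which is strict stability.

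The hard part will be the Weitzenböck computation producing $I$: commuting the two covariant derivatives correctly, verifying that the transverse and traceless conditions genuinely kill the unwanted boundary and divergence terms, and above all keeping the sign convention of $\mathcal{R}$ consistent between the second-variation formula and the curvature that emerges from the Ricci identity—an easy place to drop a factor or a sign. The conceptual point I want to bring out is that the two entries of $\max\{-\mu,\tfrac12\mu\}$ are not arbitrary: they correspond exactly to the two orthogonal components (totally symmetric versus curl) of $\nabla h$, so that a sharp enough lower bound for the Dirichlet energy is always available, with the symmetric part governing the positive-scalar-curvature regime and the curl part the nonpositive one.
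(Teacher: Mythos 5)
Your proposal is correct and follows essentially the same route as the paper: your totally symmetric part $S$ and its orthogonal complement are, up to constant factors, exactly Koiso's operators $D_1h$ and $D_2h$, and your two lower bounds for the Dirichlet energy are precisely the Bochner formulas \eqref{bochner3} and \eqref{bochner4} specialized to $\delta h=0$ with the nonnegative squared term discarded. Your Weitzenb\"ock identity for $I$ checks out with the paper's sign conventions, and the case split according to the sign of $\mu$ matches how the paper extracts the two entries of $\max\{-\mu,\tfrac12\mu\}$ from the two formulas.
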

Here, $r:M\to\R$ is the largest eigenvalue of the curvature tensor acting on traceless symmetric $(0,2)$-tensors. The proof is based on the Bochner technique. One can estimate $r$ in a purely algebraic way in terms of sectional curvature bounds
and one gets the following corollaries as consequences thereof:
\begin{cor}[Bourguignon, unpublished]\label{pinching}           
 Let $(M,g)$ be an Einstein manifold such that the sectional curvature lies in the interval $(\frac{n-2}{3n},1]$. Then $(M,g)$ is strictly stable.
\end{cor}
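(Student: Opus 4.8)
The plan is to deduce the statement directly from Koiso's Theorem~\ref{koiso}. Since the sectional curvature is everywhere positive, the Einstein constant $\mu$ is positive, so $\max\{-\mu,\tfrac12\mu\}=\tfrac12\mu$, and it suffices to establish the pointwise bound $r(p)<\tfrac12\mu$ for every $p\in M$; continuity of $r$ and compactness of $M$ then upgrade this to $\sup_{p}r(p)<\tfrac12\mu$, and the strict case of Theorem~\ref{koiso} yields strict stability. Fix $p$ and a nonzero traceless symmetric tensor $h$, and choose an orthonormal eigenbasis $\{e_i\}$ of $h$ with eigenvalues $\lambda_i$, normalized so that $\sum_i\lambda_i=0$ and $\sum_i\lambda_i^2=1$. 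Writing $K_{ik}$ for the sectional curvature of the plane spanned by $e_i,e_k$, only the sectional curvatures of the eigenbasis planes survive in the contraction, so that the curvature action $\mathcal{R}$ on symmetric $2$-tensors satisfies $\langle\mathcal{R}h,h\rangle=\sum_{i\neq k}K_{ik}\lambda_i\lambda_k$; this is the quantity I must bound above by $\tfrac12\mu$.

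The key computational step is to record two identities that encode the Einstein condition $\sum_{k\neq i}K_{ik}=\mu$, valid for each $i$. Using this relation together with the normalization $\sum_i\lambda_i^2=1$ to rewrite $\sum_{i\neq k}K_{ik}\lambda_i^2=\mu$, a short expansion of the squares gives
\begin{align*}
\langle\mathcal{R}h,h\rangle=\mu-\tfrac12\sum_{i\neq k}K_{ik}(\lambda_i-\lambda_k)^2=\tfrac12\sum_{i\neq k}K_{ik}(\lambda_i+\lambda_k)^2-\mu .
\end{align*}
The normalizations moreover yield $\sum_{i\neq k}(\lambda_i-\lambda_k)^2=2n$ and $\sum_{i\neq k}(\lambda_i+\lambda_k)^2=2(n-2)$. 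Inserting the lower pinching bound $K_{ik}>\tfrac{n-2}{3n}$ into the first expression gives the strict inequality $\langle\mathcal{R}h,h\rangle<\mu-\tfrac{n-2}{3}$, where strictness is guaranteed because $h\neq0$ is traceless and hence some $(\lambda_i-\lambda_k)^2>0$. Inserting the upper bound $K_{ik}\leq1$ into the second expression gives $\langle\mathcal{R}h,h\rangle\leq(n-2)-\mu$.

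It remains to play these two bounds against each other as $\mu$ varies. The two bounding lines $\mu-\tfrac{n-2}{3}$ and $(n-2)-\mu$, together with the target value $\tfrac12\mu$, all meet precisely at $\mu=\tfrac{2(n-2)}{3}$, and this concurrence is exactly what pins down the sharp constant $\tfrac{n-2}{3n}$. If $\mu\leq\tfrac{2(n-2)}{3}$, the first (strict) bound gives $\langle\mathcal{R}h,h\rangle<\mu-\tfrac{n-2}{3}\leq\tfrac12\mu$; if $\mu\geq\tfrac{2(n-2)}{3}$, the second gives $\langle\mathcal{R}h,h\rangle\leq(n-2)-\mu\leq\tfrac12\mu$, which is strict for $\mu>\tfrac{2(n-2)}{3}$, while at the single borderline value $\mu=\tfrac{2(n-2)}{3}$ the strict first bound applies again. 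In every case $\langle\mathcal{R}h,h\rangle<\tfrac12\mu$, hence $r<\tfrac12\mu$, and Theorem~\ref{koiso} finishes the proof. I expect the main obstacle to be conceptual rather than computational: neither pinching bound alone suffices over the whole range of $\mu$ (an estimate ignoring the Einstein relation is genuinely false, as one checks on block-constant configurations), so the argument must use the lower and upper curvature bounds simultaneously through the two identities, and it is precisely the openness of the lower endpoint of the interval that rescues the borderline case $\mu=\tfrac{2(n-2)}{3}$.
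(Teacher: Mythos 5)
Your proof is correct and follows exactly the route the paper indicates for this corollary: a purely algebraic pointwise estimate of $r$ in terms of the sectional curvature bounds, fed into the strict case of Koiso's Theorem~\ref{koiso}. The paper only sketches this step (attributing the result to Bourguignon), and your two identities $\langle\mathring{R}h,h\rangle=\mu-\tfrac12\sum_{i\neq k}K_{ik}(\lambda_i-\lambda_k)^2=\tfrac12\sum_{i\neq k}K_{ik}(\lambda_i+\lambda_k)^2-\mu$, together with the case analysis in $\mu$ and the observation that openness of the lower endpoint handles the borderline value $\mu=\tfrac{2(n-2)}{3}$, supply precisely the missing algebra.
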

\begin{cor}[{{\cite[Proposition 3.4]{Koi78}}}]\label{stabilitywhenK<0}
 Let $(M,g)$ be an Einstein manifold with sectional curvature $K<0$. Then $(M,g)$ is strictly stable.
\end{cor}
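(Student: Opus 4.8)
The plan is to derive the corollary from Koiso's theorem by controlling $r$ algebraically at each point. Since $M$ is compact and $K<0$, the Ricci curvature is negative, so the Einstein constant satisfies $\mu<0$ and hence $\max\{-\mu,\tfrac12\mu\}=-\mu>0$. By the strict version of Theorem~\ref{koiso} it therefore suffices to prove the pointwise bound $r(p)<-\mu$ for every $p\in M$, with a gap that is uniform in $p$; equivalently, that the curvature operator $\mathring{R}$ (whose largest eigenvalue on traceless symmetric $(0,2)$-tensors is $r$) is bounded strictly above by $-\mu$ at each point.

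First I would fix $p\in M$ and a traceless symmetric $h$, and diagonalize $h$ in an orthonormal eigenframe $\{e_i\}$ with eigenvalues $\lambda_i$, so that $\langle\mathring{R}h,h\rangle=\sum_{i,k}K_{ik}\lambda_i\lambda_k$, where $K_{ik}$ denotes the sectional curvature of the plane spanned by $e_i,e_k$ (and $K_{ii}=0$). The Einstein condition reads $\sum_{k\neq i}K_{ik}=\mu$ for each $i$, and since every sectional curvature is negative we may write $K_{ik}=-|K_{ik}|$, so that $\sum_{k\neq i}|K_{ik}|=-\mu$. Polarizing via $2\lambda_i\lambda_k=(\lambda_i^2+\lambda_k^2)-(\lambda_i-\lambda_k)^2$, using the Einstein identity to collapse the quadratic terms, and then completing the squares with $(\lambda_i-\lambda_k)^2=2(\lambda_i^2+\lambda_k^2)-(\lambda_i+\lambda_k)^2$, I expect to arrive at the clean identity
\begin{equation*}
\langle\mathring{R}h,h\rangle=-\mu\,|h|^2-\sum_{i<k}|K_{ik}|\,(\lambda_i+\lambda_k)^2 .
\end{equation*}

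This identity makes the result transparent: the subtracted sum is nonnegative, which already gives $\langle\mathring{R}h,h\rangle\leq-\mu|h|^2$ and hence $r(p)\leq-\mu$. The main point — and the step I expect to require the most care — is upgrading this to a \emph{strict} bound that is uniform in $p$. Here I would use that $h$ is traceless, which forces $\sum_{i<k}(\lambda_i+\lambda_k)^2=(n-2)|h|^2$; since $n\geq3$ this is strictly positive for $h\neq0$. Combined with a uniform lower bound $|K_{ik}|\geq c_0>0$ coming from compactness of $M$ together with the strict negativity of $K$, this yields $r(p)\leq-\mu-(n-2)c_0<-\mu$ for all $p$, and feeding this uniform strict inequality into Theorem~\ref{koiso} gives strict stability. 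The only genuinely delicate issues are fixing the sign and normalization conventions for $\mathring{R}$ so that the displayed identity holds as stated, and verifying that $\sum_{i<k}(\lambda_i+\lambda_k)^2$ indeed collapses to $(n-2)|h|^2$ under the trace-free constraint; both are short computations. Note that the case $n=2$, where the bound would degenerate, is automatically excluded since there are no nonzero transverse traceless tensors in that dimension.
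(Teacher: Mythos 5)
Your proposal is correct and takes essentially the same route the paper indicates: it bounds $r$ pointwise by a purely algebraic sectional-curvature estimate (via the identity $\langle\mathring{R}h,h\rangle=-\mu|h|^2-\sum_{i<k}|K_{ik}|(\lambda_i+\lambda_k)^2$ together with $\sum_{i<k}(\lambda_i+\lambda_k)^2=(n-2)|h|^2$ for traceless $h$, both of which check out), and then feeds the uniform strict bound $\sup_M r<-\mu=\max\{-\mu,\tfrac{1}{2}\mu\}$ into the strict case of Theorem~\ref{koiso}. This is exactly the ``estimate $r$ algebraically in terms of sectional curvature bounds'' argument by which the paper, following Koiso, obtains the corollary.
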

However, Corollary \ref{pinching} is ruled out for dimensions $n\geq8$ because any Einstein manifold satisfying this condition is isometric to a quotient of the round sphere. This follows from the proof of the differentiable sphere theorem \cite{BS09}.
% One gets stability by replacing the strict inequalities in the corollaries by weak inequalities. Moreover, we found out that the existence of infinitesimal Einstein deformations imposes strong conditions on the manifold. These are given in
% Proposition \ref{splittingtheorem1} and Proposition \ref{splittingtheorem2}.

Since constant curvature metrics are stable by the above, we find it convenient to formulate stability criterions in terms of the Weyl tensor. 
Let $w:M\to\R$ be the largest eigenvalue of the Weyl tensor as an operator acting on symmetric $(0,2)$-tensors.
From Koiso's Bochner formulas, we get
 \begin{thm}\label{supremeweyl}An Einstein manifold $(M,g)$ with constant $\mu$ is stable if
 \begin{align*}\left\|w\right\|_{L^{\infty}}\leq \max\left\{\mu\frac{n+1}{2(n-1)},-\mu\frac{n-2}{n-1}\right\}.
 \end{align*}
 If the strict inequality holds, then $(M,g)$ is strictly stable.
 \end{thm}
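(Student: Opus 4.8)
The plan is to derive this as a corollary of Koiso's Theorem~\ref{koiso}, by re-expressing the largest eigenvalue $r$ of the full curvature operator through the largest eigenvalue $w$ of the Weyl operator. On an Einstein manifold the trace-free Ricci component of the curvature vanishes, so the Riemann tensor splits orthogonally as $R = W + R^0$, where
\[
R^0_{ijkl} = \frac{\mu}{n-1}\left(g_{ik}g_{jl} - g_{il}g_{jk}\right)
\]
is the curvature tensor of constant sectional curvature $\frac{\mu}{n-1}$ (using $\scal = n\mu$). Writing $\mathring{R}$, $\mathring{W}$ and $\mathring{R}^0$ for the induced self-adjoint operators on symmetric $(0,2)$-tensors, this becomes $\mathring{R} = \mathring{W} + \mathring{R}^0$.

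First I would observe that $\trace(\mathring{R}h) = \mu\,\trace h$, so that $\mathring{R}$, and likewise $\mathring{W}$ and $\mathring{R}^0$, preserve the subspace of traceless symmetric $2$-tensors. A short computation with the explicit form of $R^0$ shows that $\mathring{R}^0$ acts on this subspace as the scalar $-\frac{\mu}{n-1}$ times the identity, the sign being fixed by Koiso's convention for $r$ (the pure-trace directions play no role for transverse traceless tensors). Since the Weyl tensor is totally trace-free, $\mathring{W}$ sends symmetric $2$-tensors to traceless ones and annihilates the pure-trace part; hence its largest eigenvalue on all symmetric $2$-tensors agrees with its largest eigenvalue $w$ on the traceless subspace. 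Restricting $\mathring{R} = \mathring{W} + \mathring{R}^0$ to traceless symmetric $2$-tensors then gives $\mathring{R} = \mathring{W} - \frac{\mu}{n-1}\,\mathrm{id}$ there, and comparing largest eigenvalues yields the pointwise identity $r = w - \frac{\mu}{n-1}$.

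It remains to feed this into Koiso's hypothesis. The operator $\mathring{W}$ has vanishing operator trace (a direct contraction using the trace-freeness of $W$), so its largest eigenvalue satisfies $w \geq 0$ pointwise and therefore $\|w\|_{L^\infty} = \sup_M w$. As $\mu$ is constant, the requirement $\sup_M r \leq \max\{-\mu, \frac{1}{2}\mu\}$ is thus equivalent to
\[
\|w\|_{L^\infty} \leq \max\left\{-\mu, \frac{1}{2}\mu\right\} + \frac{\mu}{n-1} = \max\left\{-\mu\frac{n-2}{n-1}, \mu\frac{n+1}{2(n-1)}\right\},
\]
which is precisely the stated bound; the strictly stable case follows word for word from the strict form of Theorem~\ref{koiso}.

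The main obstacle is the convention-sensitive middle step: one must confirm that $\mathring{R}^0$ really acts as the pure scalar $-\frac{\mu}{n-1}$ on traceless symmetric $2$-tensors, with the sign consistent with the normalization of $r$ in Theorem~\ref{koiso}. Once this scalar is correctly identified, no analytic input beyond the algebra of the curvature decomposition is needed, and the theorem reduces to the displayed substitution.
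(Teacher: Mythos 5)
Your proof is correct, but it is organized differently from the paper's. The paper does not invoke Theorem \ref{koiso} as a black box: it substitutes the decomposition $\mathring{R}h=\mathring{W}h-\frac{\mu}{n-1}h$ (valid for traceless $h$) directly into the Bochner formulas \eqref{bochner3} and \eqref{bochner4}, bounds $(\mathring{W}h,h)_{L^2}\leq\left\|w\right\|_{\infty}\left\|h\right\|^2_{L^2}$, and thereby obtains Proposition \ref{Weylproposition}, an explicit lower bound for the smallest eigenvalue $\lambda$ of $\Delta_{E}|_{TT}$, from which Theorem \ref{supremeweyl} is immediate. You instead keep Theorem \ref{koiso} intact and translate its hypothesis through the pointwise identity $r=w-\frac{\mu}{n-1}$, justified because on traceless tensors $\mathring{R}$ and $\mathring{W}$ differ by the scalar $-\frac{\mu}{n-1}$ times the identity (your convention-sensitive middle step is indeed consistent with the paper, which displays exactly this decomposition of $\mathring{R}$), and because $w\geq0$ (Lemma \ref{traceW}) gives $\left\|w\right\|_{L^{\infty}}=\sup_M w$. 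The two arguments are mathematically equivalent: Koiso's theorem is itself proved from the same two Bochner formulas, and your identity shows that Proposition \ref{Weylproposition} is precisely Koiso's estimate rewritten in Weyl terms. Your reduction is shorter and makes transparent that Theorem \ref{supremeweyl} is not merely implied by, but equivalent to, Koiso's criterion; what the paper's inlined derivation buys in exchange is the quantitative content of Proposition \ref{Weylproposition}, namely an explicit spectral gap estimate for $\Delta_{E}|_{TT}$ rather than only the sign statement needed for stability.
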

 \noindent
Using the Sobolev inequality, we find a different criterion involving an integral of this function:
 \begin{thm}\label{integralweyl}Let $(M,g)$ be an Einstein manifold with positive Einstein constant $\mu$. If
 \begin{align*}\left\|w\right\|_{L^{n/2}}\leq \mu\cdot\volume(M,g)^{2/n}\cdot\frac{n+1}{2(n-1)}\left(\frac{4(n-1)}{n(n-2)}+1\right)^{-1},
 \end{align*}
 then $(M,g)$ is stable. If the strict inequality holds, then $(M,g)$ is strictly stable.
 \end{thm}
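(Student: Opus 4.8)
The plan is to re-run the Bochner-formula argument behind Theorem \ref{supremeweyl}, but to replace the pointwise control of the Weyl term by Hölder's inequality combined with a sharp Sobolev inequality. Write $\mathcal W$ for the action of the Weyl tensor on symmetric $2$-tensors, $(\mathcal W h)_{ij}=W_{ikjl}h^{kl}$, so that $w$ is its largest eigenvalue and $\langle\mathcal W h,h\rangle\le w\,|h|^2$ pointwise. On an Einstein manifold the curvature operator restricted to trace-free tensors equals $\mathcal W-\tfrac{\mu}{n-1}\identity$, so Koiso's Bochner formula shows that stability is equivalent to
\[\mathcal Q(h):=\int_M |\nabla h|^2 + \frac{2\mu}{n-1}|h|^2 - 2\langle \mathcal W h,h\rangle\dv \ge 0\]
for every transverse traceless $h$, strictness corresponding to strict stability. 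Theorem \ref{supremeweyl} is obtained by estimating $\langle\mathcal W h,h\rangle\le\|w\|_{L^\infty}|h|^2$ and invoking the transverse-traceless eigenvalue bound $\int_M|\nabla h|^2\dv\ge\mu\int_M|h|^2\dv$ (a further consequence of the Bochner technique, responsible for the $\tfrac12\mu$ term in Theorem \ref{koiso}); I would retain both ingredients.

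The new step is to estimate the Weyl term integrally. Using $\langle\mathcal W h,h\rangle\le w|h|^2$ and Hölder's inequality with exponents $\tfrac n2$ and $\tfrac{n}{n-2}$,
\[\int_M \langle\mathcal W h,h\rangle\dv \le \|w\|_{L^{n/2}}\,\big\| |h|^2\big\|_{L^{n/(n-2)}} = \|w\|_{L^{n/2}}\,\|h\|_{L^{2n/(n-2)}}^2 .\]
To bound the last factor I would apply a Sobolev inequality to $u=|h|$, using Kato's inequality $|\nabla|h||\le|\nabla h|$. The decisive observation is that on a positive Einstein manifold the Yamabe constant is explicit: by Obata's theorem the constant function minimises the Yamabe functional of $[g]$, whence $Y(M,[g])=n\mu\,\volume(M,g)^{2/n}$, and the conformal Sobolev inequality becomes
\[\mu\,\volume(M,g)^{2/n}\,\|u\|_{L^{2n/(n-2)}}^2 \le \frac{4(n-1)}{n(n-2)}\int_M|\nabla u|^2\dv + \mu\int_M u^2\dv .\]
With $A:=\tfrac{4(n-1)}{n(n-2)}$ and $V:=\volume(M,g)$, this yields $\|h\|_{L^{2n/(n-2)}}^2\le (\mu V^{2/n})^{-1}\big(A\int_M|\nabla h|^2\dv+\mu\int_M|h|^2\dv\big)$.

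Setting $a=\int_M|\nabla h|^2\dv$ and $b=\int_M|h|^2\dv$ and substituting,
\begin{align*}
\mathcal Q(h)&\ge a+\frac{2\mu}{n-1}b - \frac{2\|w\|_{L^{n/2}}}{\mu V^{2/n}}\big(Aa+\mu b\big)\\
&= \Big(1-\frac{2A\|w\|_{L^{n/2}}}{\mu V^{2/n}}\Big)a + \Big(\frac{2\mu}{n-1}-\frac{2\|w\|_{L^{n/2}}}{V^{2/n}}\Big)b .
\end{align*}
Since $a$ and $b$ range over the cone $a\ge\mu b\ge0$, this linear expression is nonnegative for all admissible $h$ exactly when the coefficient of $a$ is nonnegative and the value on the extreme ray $a=\mu b$ is nonnegative. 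The latter condition simplifies to $\|w\|_{L^{n/2}}\le \mu V^{2/n}\tfrac{n+1}{2(n-1)}(A+1)^{-1}$, which is precisely the hypothesis, and since $0\le(n-4)(n+2)$ for $n\ge4$ (the case $n=3$ being vacuous as $w\equiv0$) it also forces the coefficient of $a$ to be nonnegative. Hence $\mathcal Q(h)\ge0$, and strict inequality gives strict stability.

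The main obstacle is getting the constant sharp, which hinges on combining two inputs correctly: Obata's theorem, which pins the Yamabe constant of a positive Einstein metric to $n\mu\,\volume(M,g)^{2/n}$ and thereby makes the Sobolev constant explicit and produces the factor $(A+1)^{-1}$; and the transverse-traceless eigenvalue bound $\int_M|\nabla h|^2\dv\ge\mu\int_M|h|^2\dv$, which is what recovers the full factor $\tfrac{n+1}{2(n-1)}$ rather than the weaker $\tfrac1{n-1}$ that the pointwise Weyl estimate alone would give, exactly as in the passage from the naive bound to Theorem \ref{supremeweyl}.
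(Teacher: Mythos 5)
Your first half is exactly the paper's argument: Hölder with exponents $\tfrac n2,\tfrac n{n-2}$ on the Weyl term, Kato's inequality, and the Sobolev inequality of a Yamabe metric with the explicit constant $Y([g])=n\mu\volume(M,g)^{2/n}$, leading to the linear form $\bigl(1-\tfrac{2A\|w\|_{L^{n/2}}}{\mu}\bigr)a+2\bigl(\tfrac{\mu}{n-1}-\|w\|_{L^{n/2}}\bigr)b$ in $a=\|\nabla h\|^2_{L^2}$, $b=\|h\|^2_{L^2}$ (volume one, $A=\tfrac{4(n-1)}{n(n-2)}$); your check that the hypothesis forces the coefficient of $a$ to be nonnegative for $n\geq4$ is also correct. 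The proof breaks, however, at the step where you restrict $(a,b)$ to the cone $a\geq\mu b$. The ``transverse-traceless eigenvalue bound'' $\int_M|\nabla h|^2\dv\geq\mu\int_M|h|^2\dv$, which you invoke as a known, unconditional consequence of the Bochner technique, is false in general: on $S^2\times S^2$ with both factors round of radius one (so $\mu=1$), the tensor $h=g_1-g_2$ is parallel, trace-free and divergence-free, so $\|\nabla h\|_{L^2}=0$ while $\|h\|_{L^2}>0$; the same happens on any product of positive Einstein manifolds. (Such spaces violate the hypothesis on $w$, but you use the cone constraint as a fact prior to, and independent of, that hypothesis.) Your attributions stem from the same misconception: Theorem \ref{supremeweyl} and the $\tfrac12\mu$ term in Theorem \ref{koiso} do not come from a spectral gap for $\nabla^*\nabla$ on $TT$-tensors, but from the Bochner formula \eqref{bochner3}, which for $TT$-tensors gives the identity $\|\nabla h\|^2_{L^2}=\|D_1h\|^2_{L^2}+2\mu\|h\|^2_{L^2}-2(\mathring{R}h,h)_{L^2}$, with a curvature term that cannot be discarded.

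The gap is fixable, and the fix is precisely the step the paper takes: the lower bound on $a$ must itself be derived \emph{conditionally}, by a second application of the same machinery. From \eqref{bochner3}, for $h\in TT$,
\begin{align*}
\|\nabla h\|^2_{L^2}\geq 2\mu\tfrac{n}{n-1}\|h\|^2_{L^2}-2(\mathring{W}h,h)_{L^2},
\end{align*}
and estimating $(\mathring{W}h,h)_{L^2}$ once more by H\"older--Sobolev and absorbing the resulting $\|\nabla h\|^2_{L^2}$ term gives
\begin{align*}
\|\nabla h\|^2_{L^2}\geq 2\left(\mu\tfrac{n}{n-1}-\|w\|_{L^{n/2}}\right)\left(1+\tfrac{8(n-1)}{\mu n(n-2)}\|w\|_{L^{n/2}}\right)^{-1}\|h\|^2_{L^2}.
\end{align*}
A short computation shows that at the threshold $\|w\|_{L^{n/2}}=\mu\tfrac{n+1}{2(n-1)}(A+1)^{-1}$ this right-hand side equals exactly $\mu\|h\|^2_{L^2}$, and it is larger for smaller $\|w\|_{L^{n/2}}$; so your cone constraint $a\geq\mu b$ does hold under the hypothesis of the theorem --- but only as a consequence of it, via this Bochner estimate. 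With that substitution your optimization over the cone goes through and reproduces the paper's final inequality \eqref{integralestimate}. In short: same method, but the one input you treated as a black box is false in general, and establishing it under the hypothesis is exactly the content of the paper's second estimate.
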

Observe that for large dimensions, the two above conditions are close to each other.
Using the previous criterion and the Gauss-Bonnet formula in dimension six, we prove a stability criterion involving the Euler characteristic of the manifold:
 \begin{thm}\label{einsteinsixstability}Let $(M,g)$ be a positive Einstein six-manifold with constant $\mu$ and $\volume(M,g)=1$.
 If
 \begin{align*}\frac{1}{25}\left(144-\frac{12\cdot7^2\cdot 3^2}{5\cdot 11^2}\right)\mu^3\leq384\pi^3\chi(M)-48\int_M \trace (\hat{W}^3)\dv,
\end{align*}
 then $(M,g)$ is strictly stable. Here, $\hat{W}$ is the Weyl curvature operator acting on two-forms.
 \end{thm}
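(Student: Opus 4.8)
The plan is to reduce the statement to the $L^{n/2}$-criterion of Theorem~\ref{integralweyl} and to supply the missing information about the Weyl tensor through the dimension-six Gauss--Bonnet formula. Specializing Theorem~\ref{integralweyl} to $n=6$ and $\volume(M,g)=1$, the right-hand side becomes $\mu\cdot\frac{7}{10}\cdot(\frac{5}{6}+1)^{-1}=\frac{21}{55}\mu$; since $w\geq0$ (the Weyl tensor is trace-free as an operator on symmetric $(0,2)$-tensors), it therefore suffices to show that the hypothesis forces
\begin{align*}
\int_M w^3\dv<\left(\tfrac{21}{55}\right)^3\mu^3.
\end{align*}
The factorization $\frac{21}{55}=\frac{3\cdot7}{5\cdot11}$ already accounts for the primes $3,7,5,11$ appearing in the constant of the theorem.

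The next step is a pointwise algebraic inequality controlling the top eigenvalue $w$ by the cubic Weyl invariants that Gauss--Bonnet can access, namely $|W|^2$ and $\trace(\hat W^3)$. Here I would first bound $w$ by the tensor norm $|W|$ with the sharp $n=6$ constant---this is the same kind of purely algebraic estimate that upgrades Koiso's Theorem~\ref{koiso} to Theorem~\ref{supremeweyl}---and then express the resulting cubic quantity in terms of $\mu\int_M|W|^2\dv$ and $\int_M\trace(\hat W^3)\dv$. This is the step I expect to be the main obstacle: the eigenvalues of the Weyl tensor acting on symmetric $2$-tensors are not those of $\hat W$ acting on two-forms, so one must track the exact constant relating $w$, $|W|^2$ and $\trace(\hat W^3)$, and one must confront the genuine degree mismatch that Gauss--Bonnet naturally produces the quadratic integral $\int_M|W|^2\dv$ whereas Theorem~\ref{integralweyl} demands the cubic quantity $\int_M w^3\dv$.

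For the third step I would write out the Gauss--Bonnet formula in dimension six. Decomposing the Riemann tensor into its Weyl part and its scalar part and using $\scal=6\mu$, the cubic Gauss--Bonnet integrand collapses to a combination of the three terms $\mu^3$, $\mu|W|^2$ and $\trace(\hat W^3)$, so that
\begin{align*}
384\pi^3\chi(M)=\tfrac{144}{25}\mu^3+B\,\mu\int_M|W|^2\dv+48\int_M\trace(\hat W^3)\dv
\end{align*}
for explicit constants, with $B$ turning out to be negative; the coefficient $\frac{144}{25}$ is pinned down by evaluating this scale-covariant identity on the round sphere $S^6$, where $\chi=2$ and $W=0$. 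Solving for $\mu\int_M|W|^2\dv$ eliminates the Weyl-squared integral in favour of $\chi(M)$, $\mu$ and $\int_M\trace(\hat W^3)\dv$.

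Finally I would combine the pieces. Because $B<0$, the hypothesis of the theorem is, via the Gauss--Bonnet identity, precisely an upper bound on $\int_M|W|^2\dv$, which the algebraic inequality of the second step converts into the required bound $\int_M w^3\dv<(\frac{21}{55})^3\mu^3$; strict stability then follows from Theorem~\ref{integralweyl}. The whole argument hinges on calibrating all dimensional constants so that the substitution of Gauss--Bonnet into the algebraic estimate reproduces exactly the factor $\frac{1}{25}(144-\frac{12\cdot7^2\cdot3^2}{5\cdot11^2})$ on the left-hand side; in particular the precise value of the coefficient $B$, together with the sharp constant in the second step, is what ultimately must be computed.
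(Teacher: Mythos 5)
Your overall shape --- deduce from Gauss--Bonnet a bound that feeds into Theorem \ref{integralweyl}, with target $\left\|w\right\|_{L^3}<\frac{21}{55}\mu$ for $n=6$ and unit volume, finishing with the pointwise estimate $w\leq|W|$ --- is indeed the paper's strategy, and you correctly locate the danger spot. But the two middle steps of your plan fail, for related reasons. First, the identity
\begin{align*}
384\pi^3\chi(M)=\tfrac{144}{25}\mu^3+B\,\mu\int_M|W|^2\dv+48\int_M\trace(\hat{W}^3)\dv
\end{align*}
is false for every choice of $B$. The six-dimensional Gauss--Bonnet integrand for an Einstein metric contains, besides the contraction $R^{ijkl}R_{ij}{}^{mn}R_{klmn}$ (the one that produces $\trace(\hat{R}^3)$ and hence $\trace(\hat{W}^3)$), a second, algebraically independent cubic contraction $R^{ijkl}R_{imkn}R_{j}{}^{n}{}_{l}{}^{m}$, which cannot be rewritten in terms of $\mu^3$, $\mu|W|^2$ and $\trace(\hat{W}^3)$; evaluating on $S^6$ cannot detect this, since all cubic Weyl invariants vanish there. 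The paper removes this extra contraction not algebraically but by Sakai's integral identity (Lemma \ref{sakailemma}), which comes from the Bianchi identity and integration by parts, and the price is a gradient term: the correct statement (Proposition \ref{einsteinsix}) is
\begin{align*}
384\pi^3\chi(M)=\int_M\Bigl\{-\tfrac{14}{5}\mu|W|^2-2|\nabla W|^2+\tfrac{144}{25}\mu^3+48\trace(\hat{W}^3)\Bigr\}\dv.
\end{align*}
If your gradient-free identity were true, subtracting it from this one would make $\left\|\nabla W\right\|_{L^2}^2$ a universal multiple of $\mu\left\|W\right\|_{L^2}^2$ on all Einstein six-manifolds, which is absurd (compare a symmetric space with $W\neq0$, where $\nabla W\equiv0$, with any Einstein metric with non-parallel Weyl tensor).

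Second, the gradient term is not incidental: it is exactly what resolves the degree mismatch you flag as ``the main obstacle,'' and without it the obstacle is insurmountable. No pointwise algebraic inequality bounds $w^3$ by a combination of $\mu|W|^2$ and $\trace(\hat{W}^3)$ (the latter has no sign, and cubic beats $\mu\cdot$quadratic wherever $|W|\gg\mu$), and no $L^2$ bound on $W$ implies an $L^3$ bound, since $W$ could concentrate with large amplitude on a small set. What forbids such concentration is the Sobolev inequality applied to the tensor $W$ (via Kato's inequality), which for $n=6$, $\scal=6\mu$, unit volume reads $\left\|W\right\|_{L^3}^2\leq\frac{5}{6\mu}\left\|\nabla W\right\|_{L^2}^2+\left\|W\right\|_{L^2}^2$. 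The paper splits $-\frac{14}{5}=-\frac{12}{5}-\frac{2}{5}$ and uses the exact match $\frac{12}{5}\mu\cdot\frac{5}{6\mu}=2$ to absorb the gradient term:
\begin{align*}
-\tfrac{12}{5}\mu\left\|W\right\|_{L^2}^2-2\left\|\nabla W\right\|_{L^2}^2\leq-\tfrac{12}{5}\mu\left\|W\right\|_{L^3}^2,
\end{align*}
so Proposition \ref{einsteinsix} gives $384\pi^3\chi(M)<-\frac{12}{5}\mu\left\|W\right\|_{L^3}^2+\frac{144}{25}\mu^3+48\int_M\trace(\hat{W}^3)\dv$, and the hypothesis then yields $\frac{12}{5}\mu\left\|W\right\|_{L^3}^2<\frac{12}{5}\mu\bigl(\frac{21}{55}\bigr)^2\mu^2$, i.e.\ $\left\|W\right\|_{L^3}<\frac{21}{55}\mu$, after which $\left\|w\right\|_{L^3}\leq\left\|W\right\|_{L^3}$ and Theorem \ref{integralweyl} conclude. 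So the two ingredients missing from your outline are Sakai's identity (trading the second cubic invariant for $-2|\nabla W|^2$) and the Sobolev absorption of that gradient term with exactly matching constants; it is these, not a sharper algebraic eigenvalue estimate, that carry the proof.
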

For K\"ahler-Einstein manifolds, the Bochner tensor plays a similar role as the Weyl tensor for general Einstein manifolds. We prove similar theorems as Theorem \ref{supremeweyl} and Theorem \ref{integralweyl} for K\"ahler-Einstein manifolds which involve
the Bochner tensor instead of the Weyl tensor. In this context, we correct a small error in \cite{IN05}.
\vspace{3mm}
\textbf{Acknowledgement.} This article is based on a part of the authors' PhD-thesis. The author would like to thank his advisor Christian B\"ar for helpful discussions. Moreover, the 
author thanks the Max-Planck Institute for Gravitational Physics for financial support.
\section{Preliminaries}
Let us first fix some notation and conventions. We define the Laplace-Beltrami operator acting on functions by $\Delta=-\trace \nabla^2$. For the Riemann curvature tensor, we use the sign convention such that
 $R_{X,Y}Z=\nabla^2_{X,Y}Z-\nabla^2_{Y,X}Z$. Given a fixed metric, we equip the bundle of $(r,s)$-tensor fields (and any subbundle) with the natural scalar product induced by the metric.
By $S^pM$, we denote the bundle of symmetric $(0,p)$-tensors.
 Let $\left\{e_1,\ldots,e_n\right\}$ be a local orthonormal frame. The divergence is the map $\delta:\Gamma(S^pM)\to\Gamma(S^{p-1}M)$, defined by
\begin{align*}\delta T(X_1,\ldots,X_{p-1})=-\sum_{i=1}^n\nabla_{e_i}T(e_i,X_1,\ldots,X_{p-1})
\end{align*}
and its adjoint $\delta^*\colon\Gamma(S^{p-1}M)\to \Gamma(S^pM)$ with respect to the natural $L^2$-scalar product is given by
\begin{align*}\delta^*T(X_1,\ldots,X_p)=\frac{1}{p}\sum_{i=0}^{p-1}\nabla_{X_{1+i}}T(X_{2+i},\ldots,X_{p+i}),
\end{align*}
where the sums $1+i,\ldots,p+i$ are taken modulo $p$.

 The second variation of $S$ at Einstein metrics was considered in \cite{Koi79}. For details, see also \cite[Chapter 4]{Bes08}.
 A useful fact for studying $S''$ is that any compact Einstein metric except the standard sphere admits the decomposition
 \begin{align}\label{decomp}T_g\mathcal{M}=\Gamma(S^2M)=C^{\infty}(M)\cdot g\oplus \delta_g^{*}(\Omega^1(M))\oplus \trace_g^{-1}(0)\cap\delta_g^{-1}(0)
 \end{align}
 and these factors are all infinite-dimensional. It turned out that this splitting is orthogonal with respect to $S''$. Thus, the second variation can be studied separately on each of these factors.

The first factor of \eqref{decomp} is the tangent space of the conformal class of $g$. It is known that $S''$ is positive on volume-preserving conformal deformations. This follows from
 \begin{equation}\label{conformalsecondvariation}S''(f\cdot g)=\frac{n-2}{2}\int_{M}\langle f,(n-1)\Delta_g f-n\mu f\rangle\dv_g
 \end{equation}
 and the following
 \begin{thm}[{{\cite[Theorem 1 and Theorem 2]{Ob62}}}]\label{obatathm}Let $(M,g)$ a compact Riemannian manifold and let $\lambda$ be the smallest nonzero eigenvalue of the Laplace
 operator acting on $C^{\infty}(M)$. Assume there exists $\mu>0$ such that $\ric(X,X)\geq \mu |X|^2$ for any vector field $X$. Then $\lambda$ satisfies the estimate
 \begin{align*}\lambda\geq \frac{n}{n-1}\mu,
 \end{align*}
 and equality holds if and only if $(M,g)$ is isometric to the standard sphere.
 \end{thm}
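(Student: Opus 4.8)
The plan is to prove the lower bound by the Bochner technique and to analyze the equality case via Obata's equation. Let $f$ be an eigenfunction for $\lambda$, so that $\Delta f=\lambda f$; since $\lambda\neq0$, the function $f$ is nonconstant. With the convention $\Delta=-\trace\nabla^2$, the starting point is the integrated Bochner identity, valid on any closed manifold,
\begin{equation*}
\int_M (\Delta f)^2\dv=\int_M|\nabla^2 f|^2\dv+\int_M\ric(\nabla f,\nabla f)\dv.
\end{equation*}
This follows by integrating the pointwise Bochner formula, using $\int_M\Delta(\,\cdot\,)\dv=0$ together with the integration by parts $\int_M\langle\nabla f,\nabla\Delta f\rangle\dv=\int_M(\Delta f)^2\dv$.

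Next I would estimate both terms on the right from below. For the Hessian term, the pointwise Cauchy--Schwarz inequality $|\nabla^2 f|^2\geq\frac{1}{n}(\trace\nabla^2 f)^2=\frac{1}{n}(\Delta f)^2$ gives $\int_M|\nabla^2 f|^2\dv\geq\frac{1}{n}\int_M(\Delta f)^2\dv$. For the Ricci term, the hypothesis $\ric(\nabla f,\nabla f)\geq\mu|\nabla f|^2$ together with $\int_M|\nabla f|^2\dv=\int_M f\Delta f\dv=\lambda\int_M f^2\dv$ yields $\int_M\ric(\nabla f,\nabla f)\dv\geq\mu\lambda\int_M f^2\dv$. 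Inserting both estimates into the Bochner identity and using $\int_M(\Delta f)^2\dv=\lambda^2\int_M f^2\dv$, I obtain, after dividing by the positive number $\lambda\int_M f^2\dv$, the inequality $\lambda\geq\frac{\lambda}{n}+\mu$. This rearranges to $\lambda\frac{n-1}{n}\geq\mu$, which is the claimed estimate $\lambda\geq\frac{n}{n-1}\mu$.

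For the rigidity statement I would trace back the equality. If $\lambda=\frac{n}{n-1}\mu$, then both inequalities above are equalities. Equality in the Cauchy--Schwarz step forces $\nabla^2 f$ to be pointwise proportional to the metric, namely $\nabla^2 f=\frac{\trace\nabla^2 f}{n}g=-\frac{\lambda}{n}f\,g=-\frac{\mu}{n-1}f\,g$. Setting $c^2=\frac{\mu}{n-1}>0$, the eigenfunction solves Obata's equation $\nabla^2 f+c^2 f\,g=0$. I expect this rigidity step to be the main obstacle: the reverse direction (checking that the round sphere realizes equality) is a direct computation, while the forward direction requires genuine geometric input. The key observation is that $q:=|\nabla f|^2+c^2 f^2$ has $\nabla q=2\nabla^2 f(\nabla f)+2c^2 f\nabla f=0$, hence $q$ is constant; normalizing so that $\max f=1$ gives $|\nabla f|^2=c^2(1-f^2)$, so $f$ ranges over $[-1,1]$ and has exactly two critical points. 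Reconstructing the metric along the gradient flow of $f$ between the minimum and maximum then identifies $(M,g)$ with the sphere of constant curvature $c^2$ (whose Ricci curvature is $(n-1)c^2 g=\mu g$), which is Obata's characterization of the round sphere and completes the equality case.
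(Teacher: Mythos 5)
The paper offers no proof of this statement at all: it is quoted verbatim from Obata's paper \cite{Ob62} and used as a black box (together with \eqref{conformalsecondvariation}) to conclude positivity of $S''$ on volume-preserving conformal deformations. So there is nothing in the paper to compare against; what you have written is the standard Lichnerowicz--Obata argument from the literature, and it is essentially correct. The eigenvalue estimate is complete as you state it: with the convention $\Delta=-\trace\nabla^2$ your integrated Bochner identity, the pointwise bound $|\nabla^2 f|^2\geq\frac{1}{n}(\Delta f)^2$, and $\int_M|\nabla f|^2\dv=\lambda\int_M f^2\dv$ combine exactly as claimed to give $\lambda\geq\frac{n}{n-1}\mu$, and equality does force pointwise equality in the Cauchy--Schwarz step, yielding Obata's equation $\nabla^2 f+c^2f\,g=0$ with $c^2=\frac{\mu}{n-1}$.

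The one place where your write-up is compressed rather than complete is the rigidity step, and you correctly identify it as the main obstacle. Two details deserve to be spelled out. First, from the first integral $|\nabla f|^2=c^2(1-f^2)$ you assert ``exactly two critical points,'' but a priori the critical set is $\{f=\pm1\}$, which could be larger; the standard fix is to note that along any unit-speed geodesic $\gamma$ one has $\frac{d^2}{dt^2}f(\gamma(t))=\nabla^2f(\dot\gamma,\dot\gamma)=-c^2f(\gamma(t))$, hence $f(\gamma(t))=\cos(ct)$ for geodesics issuing from a maximum point, and combined with Myers' diameter bound $\diam(M)\leq\pi/c$ this shows each of $\{f=1\}$ and $\{f=-1\}$ is a single point. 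Second, the phrase ``reconstructing the metric along the gradient flow'' stands in for the warped-product computation: the Hessian equation determines the second fundamental form of the level sets of $f$, which identifies $g$ with $dr^2+c^{-2}\sin^2(cr)\,g_{S^{n-1}}$ in polar coordinates about the pole, i.e.\ the round sphere of curvature $c^2$ (whose Ricci curvature is $(n-1)c^2g=\mu g$, consistent with the hypothesis). These are standard, well-documented details, so your proposal is a sound proof sketch of the cited theorem rather than a gap; the converse direction for the round sphere is, as you say, a direct computation.
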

The second factor is the tangent space
of the orbit of the diffeomorphism group acting on $g$. By diffeomorphism invariance, $S''$ vanishes on this factor. The third factor is the space of non-trivial constant scalar curvature deformations of $g$. The tensors in the third factor are also often called transverse traceless or $TT$.
From now on, we abbreviate $TT_g=\trace_g^{-1}(0)\cap\delta_g^{-1}(0)$.
The second variation of $S$ on $TT$-tensors is given by
\begin{align*}S''(h)=-\frac{1}{2}\int_M\langle h, \nabla^*\nabla h -2\mathring{R}h\rangle\dv.
\end{align*}
Here, $\mathring{R}$ is the action of the curvature tensor on symmetric $(0,2)$-tensors, given by 
\begin{align*}\mathring{R}h(X,Y)=\sum_{i=1}^nh(R_{e_i,X}Y,e_i).
 \end{align*}
\begin{defn}We call the operator $\Delta_E:\Gamma(S^2M)\to\Gamma(S^2M)$, $\Delta_Eh=\nabla^*\nabla h -2\mathring{R}h$ the Einstein operator.
 \end{defn}
This is a self-adjoint elliptic operator and by compactness of $M$, it has a discrete spectrum. The Einstein operator preserves all components of the splitting \eqref{decomp}.
\begin{defn}\label{stability}We call a compact Einstein manifold $(M,g)$ stable, if the Einstein operator is nonnegative on $TT$-tensors and strictly stable, if it is positive on $TT$-tensors.
 We call $(M,g)$ unstable, if the Einstein operator admits negative eigenvalues on $TT$. Furthermore, elements in $\kernel(\Delta_E|_{TT})$ are called infinitesimal Einstein deformations.
\end{defn}
\begin{rem}Let $(M,g)$ be a compact Einstein manifold and let $\mathcal{C}_g$ be the set of constant scalar curvature metrics admitting the same volume as $g$. Close to $g$, this is a manifold and
\begin{align*}
 T_{g}\mathcal{C}_g=\delta_g^{*}(\Omega^1(M))\oplus TT_g,
\end{align*}
so stability precisely means that $S''$ is nonpositive on $T_g\mathcal{C}_g$.
\end{rem}
\begin{rem}
If $g_t$ is a nontrivial curve of Einstein metrics through $g=g_0$ orthogonal to $\R\cdot(g\cdot \Diff(M))$, then $\dot{g}_0$ is an infinitesimal Einstein deformation.
Evidently, an Einstein manifold is isolated in the space of Einstein structures (the set of Einstein metrics modulo diffeomorphism and rescaling) if $\Delta_E|_{TT}$ has trivial kernel.
\end{rem}
\begin{defn}An infinitesimal Einstein deformation $h$ is called integrable if there exists a curve of Einstein metrics tangent to $h$.
 \end{defn}
\section{Stability and Weyl curvature}\label{stabilityweyl}
Koiso's stability criterion (Theorem \ref{koiso}) is a first attempt to relate stability of Einstein manifolds to curvature assumptions.
Because we also work with the methods later on, we will sketch the proof of the theorem.
Let $S^2_gM$\index{$S^2_gM$} be the vector bundle
 of symmetric $(0,2)$-tensors whose trace with respect to $g$ vanishes.\index{traceless tensor}
 We define a function $r:M\to \R$ by\index{$r(p)$}
\begin{align}\label{r(p)}r(p)=\sup\left\{\frac{\langle \mathring{R}\eta,\eta\rangle_p}{|\eta|^2_p}\Biggmid\eta\in (S^2_gM)_p\right\}.
\end{align}
We now use the Bochner technique. Let the two differential operators $D_1$ and $D_2$ be defined\index{differential operator} by\index{$D_1$, a differential operator}\index{$D_2$, a differential operator}
\begin{align*}D_1h(X,Y,Z)=&\frac{1}{\sqrt{3}}(\nabla_X h(Y,Z)+\nabla_Y h(Z,X)+\nabla_Z h(X,Y)),\\
D_2h(X,Y,Z)=&\frac{1}{\sqrt{2}}(\nabla_X h(Y,Z)-\nabla_Y h(Z,X)).
\end{align*}
For the Einstein operator, we have the Bochner formulas\index{Bochner formula}
\begin{align}\label{bochner3}(\Delta_Eh,h)_{L^2}&=\left\|D_1h\right\|_{L^2}^2+2\mu \left\|h\right\|^2_{L^2}-4(\mathring{R}h,h)_{L^2}-2\left\|\delta h\right\|_{L^2}^2,\\
             \label{bochner4} (\Delta_Eh,h)_{L^2}&=\left\|D_2h\right\|_{L^2}^2-\mu \left\|h\right\|^2_{L^2}-(\mathring{R}h,h)_{L^2}+\left\|\delta h\right\|_{L^2}^2,
\end{align}
see \cite[p.~428]{Koi78} or \cite[p.~355]{Bes08} for more details.
Because of the bounds on $r$ and $\delta h=0$, we obtain either $(\Delta_Eh,h)_{L^2}\geq0$ or $(\Delta_Eh,h)_{L^2}>0$ for $TT$-tensors\index{TT@$TT$-tensor} by (\ref{bochner3}) or (\ref{bochner4}).
This proves Theorem \ref{koiso}.

 We have seen that constant curvature metrics and sufficiently pinched Einstein manifolds are stable.\index{sectional curvature!pinched}
 This motivates to prove stability theorems in terms of the Weyl tensor\index{Weyl curvature tensor} which measures the deviation of an Einstein manifold of being of constant curvature.\index{constant curvature}
Recall that on Einstein manifolds, the curvature tensor decomposes as 
 \begin{align}\label{riccidecomposition2}R=W+\frac{\mu}{2(n-1)}(g\owedge g),
 \end{align}
where $\mu$ is the Einstein constant of $g$. The tensor $W$ is the Weyl curvature tensor and $\owedge$ denotes the Kulkarni-Nomizu product of symmetric $(0,2)$-tensors, given by
 \begin{align*}(h\owedge k)(X,Y,Z,W)=h(X,W)&k(Y,Z)+h(Y,Z)k(X,W)
                                           -h(X,Z)k(Y,W)-h(Y,W)k(X,Z).
 \end{align*}
The Weyl tensor acts naturally on symmetric $(0,2)$-tensors by
 \begin{align*}\mathring{W}h(X,Y)=\sum_{i,j=1}^nW(e_i,X,Y,e_j)h(e_j,e_i),\end{align*}
and a straightforward calculation shows that the action of the Riemann tensor decomposes as
 \begin{align*}\mathring{R}h(X,Y)=\mathring{W}h(X,Y)+\frac{\mu}{n-1}\left\{g(X,Y)\text{tr}h-h(X,Y)\right\}.
   \end{align*}
 \begin{lem}\label{traceW}Let $(M,g)$ be any Riemannian manifold and let $p\in M$. The operator $\mathring{W}:(S^2M)_p\to (S^2M)_p$ is trace-free. It is indefinite\index{indefinite} as long as $W_p\neq0$.
 \end{lem}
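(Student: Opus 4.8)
The plan is to establish three structural properties of the endomorphism $\mathring{W}$ of $(S^2M)_p$ and then read off both claims. Fix $p\in M$, a $g$-orthonormal basis $e_1,\dots,e_n$ of $T_pM$, and abbreviate $W_{ijkl}=W(e_i,e_j,e_k,e_l)$, so that $(\mathring{W}h)_{ab}=\sum_{i,j}W_{iabj}h_{ij}$. First I would verify that $\mathring{W}$ is self-adjoint with respect to the induced inner product: expanding $\langle\mathring{W}h,k\rangle=\sum W_{iabj}h_{ij}k_{ab}$ and renaming the dummy indices $(i,j)\leftrightarrow(a,b)$, the symmetry in $h,k$ reduces to the identity $W_{aijb}=W_{iabj}$, which is immediate from the two antisymmetries $W_{aijb}=-W_{iajb}=W_{iabj}$. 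Hence $\mathring{W}$ is a self-adjoint operator and has real spectrum.

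Next I would compute the trace in the orthonormal basis of $(S^2M)_p$ consisting of the tensors $e_a\otimes e_a$ and $E_{ab}=\tfrac{1}{\sqrt{2}}(e_a\otimes e_b+e_b\otimes e_a)$ with $a<b$. A direct bookkeeping (using $W_{aabb}=W_{bbaa}=W_{aaaa}=0$ and $W_{baba}=W_{abab}$) shows that the off-diagonal basis vector labelled $(a,b)$ contributes $\langle\mathring{W}E_{ab},E_{ab}\rangle=W_{abab}$, while each diagonal vector $e_a\otimes e_a$ contributes $W_{aaaa}=0$. Summing gives $\trace\mathring{W}=\sum_{a<b}W_{abab}=\tfrac12\sum_{a,b}W_{abab}$, which is half of a $g$-trace of the Weyl tensor over a pair of its slots and therefore vanishes, since $W$ is totally trace-free. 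This proves the first assertion.

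For indefiniteness I would reduce matters to showing $\mathring{W}\neq0$ whenever $W_p\neq0$: a self-adjoint, trace-free endomorphism that is nonzero has real eigenvalues summing to zero that are not all zero, hence eigenvalues of both signs, and is therefore indefinite. The remaining — and, I expect, the only delicate — step is the implication $\mathring{W}=0\Rightarrow W=0$. Suppose $\mathring{W}=0$. Testing the identity $\sum_{i,j}W_{iabj}h_{ij}=0$ against the symmetric tensors $h=e_c\otimes e_d+e_d\otimes e_c$ yields $W_{cabd}+W_{dabc}=0$ for all indices, i.e.\ $W$ is in addition antisymmetric under interchange of its first and fourth slots. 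Together with the curvature symmetries (antisymmetry in the first and in the second pair, and symmetry under interchange of the two pairs), the slot transpositions now available generate the full symmetric group $S_4$ acting through the sign character, which forces $W$ to be totally antisymmetric. But the first Bianchi identity makes the totally antisymmetric part of an algebraic curvature tensor vanish, so $W=0$, contradicting $W_p\neq0$. Consequently $\mathring{W}$ is a nonzero, self-adjoint, trace-free operator, hence indefinite, which is the second assertion. (Alternatively, injectivity of the map $W\mapsto\mathring{W}$ follows from Schur's lemma, since it is a nonzero $O(n)$-equivariant map out of the irreducible space of algebraic Weyl tensors.)
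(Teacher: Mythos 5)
Your proof is correct, and the two computational steps that carry it both check out: the diagonal entries $\langle\mathring{W}E_{ab},E_{ab}\rangle=W_{abab}$ sum to a trace of $W$ (hence zero), and testing $\mathring{W}=0$ against $e_c\otimes e_d+e_d\otimes e_c$ indeed yields $W_{cabd}+W_{dabc}=0$. Your trace computation and your overall reduction --- a nonzero, self-adjoint, trace-free operator is indefinite, so it suffices to prove $\mathring{W}=0\Rightarrow W_p=0$ --- coincide with the paper's. Where you genuinely diverge is in that last implication. The paper uses only the diagonal (quadratic-form) information: $\mathring{W}=0$ forces all values $W_{ijji}$ to vanish, in every orthonormal basis, equivalently $W(X,Y,Y,X)=0$ for all $X,Y$, and then appeals, without detail, to ``the symmetries of the Weyl tensor'' --- that is, to the classical polarization lemma that an algebraic curvature tensor whose sectional values all vanish is zero. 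You instead exploit the off-diagonal information in $\mathring{W}=0$: the derived antisymmetry $W_{cabd}=-W_{dabc}$ under swapping slots $1$ and $4$, together with the antisymmetries in slots $(1,2)$ and $(3,4)$, gives three transpositions whose graph connects $\{1,2,3,4\}$, so they generate $S_4$ and $W$ transforms by the sign character, i.e.\ is totally antisymmetric; the first Bianchi identity then reads $3W_{abcd}=0$ because its three terms differ by even permutations of the slots. Both routes ultimately rest on the first Bianchi identity, but yours is self-contained and makes that dependence explicit, at the cost of a group-theoretic digression, while the paper's is shorter but leaves the key algebraic lemma to the reader (and, as literally written, needs the remark that basis-independence of $\mathring{W}=0$ upgrades the vanishing of the components $W_{ijji}$ to the vanishing of $W(X,Y,Y,X)$ for all vectors). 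Your parenthetical Schur's-lemma alternative is also valid for $n\geq4$, where the algebraic Weyl tensors form an irreducible $O(n)$-module (for $n=3$ that space is zero and the lemma is vacuous), though it trades elementarity for representation theory.
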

 \begin{proof}First we compute the trace of $\mathring{W}$ acting on all symmetric $(0,2)$-tensors.
 Let $\{e_1,\ldots, e_n\}$ be an orthonormal basis of $T_pM$. Then an orthonormal basis of $(S^2M)_p$ is given by
 \begin{align*}\eta_{(ij)}=\frac{1}{\sqrt{2}}e_i^*\odot e_j^*,\qquad 1\leq i\leq j\leq n,\end{align*}
 where $\odot$ denotes the symmetric tensor product\index{symmetric tensor product}.
 Simple calculations yield
 \begin{align*}\langle \mathring{W}\eta_{(ij)},\eta_{(ij)}\rangle=-W_{ijji}.\end{align*}
 Thus,
 \begin{align*}\trace{\mathring{W}}=\sum_{1\leq i\leq j\leq n}\langle \mathring{W}\eta_{(ij)},\eta_{(ij)}\rangle=-\sum_{1\leq i\leq j\leq n}W_{ijji}=-\frac{1}{2}\sum_{i,j=1}^nW_{ijji}=0
 \end{align*}
 because the Weyl tensor has vanishing trace.
Suppose now that the operator $\mathring{W}$ vanishes, then all $W_{ijji}$ vanish. By the symmetries of the Weyl tensor\index{Weyl curvature tensor}, this already implies that $W_p$ vanishes.
 \end{proof}
\noindent
 To study the behavior of this operator, we define a function $w:M\to\R$ by \index{$w(p)$}
 \begin{align}\label{functionw}w(p)=\sup\left\{\frac{\langle \mathring{W}\eta,\eta\rangle_p}{|\eta|_p^2}\Biggmid\eta\in (S^2M)_p\right\}.
 \end{align}
 Thus, $w(p)$ is the largest eigenvalue of the action $\mathring{W}\colon(S^2M)_p\to (S^2M)_p$. Lemma \ref{traceW} implies that the function $w$ is nonnegative.
 Since $\mathring{W}g=0$, $w(p)$ is also the largest eigenvalue of $\mathring{W}$ restricted to $(S_g^2M)_p$.

 The decomposition of $\mathring{R}$ allows us to estimate the smallest eigenvalue of $\Delta_E$ acting on $TT$-tensors \index{TT@$TT$-tensor}in terms of the function $w$.
From (\ref{bochner3}), we obtain 
 \begin{align*}(\Delta_Eh,h)&=\left\|D_1h\right\|^2_{L^2}+2\mu\left\|h\right\|_{L^2}^2-4(\mathring{R}h,h)\\
                               &\geq 2\mu\left\|h\right\|^2_{L^2}+4\frac{\mu}{n-1}\left\|h\right\|^2_{L^2}-4(\mathring{W}h,h)\\
                               &\geq\left[2\mu\frac{n+1}{n-1}-4\left\|w\right\|_{\infty}\right]\left\|h\right\|^2_{L^2},
 \end{align*}
 and similarly from (\ref{bochner4}), 
 \begin{align*}(\Delta_E h,h)&=\left\|D_2h\right\|^2_{L^2}-\mu\left\|h\right\|_{L^2}^2-(\mathring{R}h,h)\\
                               &\geq -\mu\left\|h\right\|^2_{L^2}+\frac{\mu}{n-1}\left\|h\right\|^2_{L^2}-(\mathring{W}h,h)\\
                               &\geq\left[-\mu\frac{n-2}{n-1}-\left\|w\right\|_{\infty}\right]\left\|h\right\|^2_{L^2}.
 \end{align*}
 \begin{prop}\label{Weylproposition}Let $(M,g)$ be Einstein with constant $\mu$ and let $\lambda$ be the smallest eigenvalue of $\Delta_{E}|_{TT}$. Then
 \begin{align*}\lambda\geq\max\left\{2\mu\frac{n+1}{n-1}-4\left\|w\right\|_{\infty},-\mu\frac{n-2}{n-1}-\left\|w\right\|_{\infty}\right\}.
 \end{align*}
 \end{prop}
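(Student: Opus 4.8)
The plan is to combine the two lower bounds already derived from the Bochner formulas by invoking the variational characterization of the smallest eigenvalue; the computation displayed immediately before the statement is in fact the heart of the argument, so the remaining work is organizational.

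First I would recall that, since $\Delta_E$ is self-adjoint and elliptic and preserves $TT_g$, its smallest eigenvalue on $TT$-tensors is the infimum of the Rayleigh quotient
\begin{align*}
\lambda=\inf\left\{\frac{(\Delta_E h,h)_{L^2}}{\left\|h\right\|_{L^2}^2}\Biggmid h\in TT_g,\ h\neq0\right\}.
\end{align*}
It therefore suffices to bound $(\Delta_E h,h)_{L^2}$ from below by a fixed multiple of $\left\|h\right\|_{L^2}^2$, uniformly in $h$.

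Next I would record the passage from the pointwise eigenvalue bound to an $L^2$ bound for the Weyl term: by definition of $w(p)$ as the largest eigenvalue of the self-adjoint operator $\mathring{W}$ at $p$, one has $\langle\mathring{W}h,h\rangle_p\leq w(p)|h|_p^2\leq\left\|w\right\|_{\infty}|h|_p^2$ pointwise, and integrating yields $(\mathring{W}h,h)_{L^2}\leq\left\|w\right\|_{\infty}\left\|h\right\|_{L^2}^2$. For $h\in TT_g$ the trace-free condition collapses the curvature decomposition to $\mathring{R}h=\mathring{W}h-\frac{\mu}{n-1}h$, while $\delta h=0$ removes the divergence terms from both (\ref{bochner3}) and (\ref{bochner4}). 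Substituting into (\ref{bochner3}) and discarding the nonnegative $\left\|D_1h\right\|_{L^2}^2$ reproduces the first chain of inequalities above, giving $\lambda\geq2\mu\frac{n+1}{n-1}-4\left\|w\right\|_{\infty}$; the identical manipulation of (\ref{bochner4}), now discarding $\left\|D_2h\right\|_{L^2}^2$, yields $\lambda\geq-\mu\frac{n-2}{n-1}-\left\|w\right\|_{\infty}$. Since each bound holds for every nonzero $TT$-tensor, both survive the passage to the infimum, and taking their maximum finishes the argument.

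I do not expect a genuine obstacle here, as the estimate is purely algebraic once the Bochner formulas and the decomposition of $\mathring{R}$ are in hand. The only steps demanding care are the pointwise-to-$L^2$ comparison for $\mathring{W}$ and the remark that dropping $\left\|D_1h\right\|_{L^2}^2$ and $\left\|D_2h\right\|_{L^2}^2$ is legitimate precisely because these are squared norms --- so that the two constants obtained are genuine lower bounds for the full spectrum of $\Delta_E|_{TT}$, rather than bounds valid only for an individual test tensor.
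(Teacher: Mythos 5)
Your proposal is correct and follows essentially the same route as the paper: the paper's proof consists precisely of the two displayed inequality chains preceding the proposition, which apply the Bochner formulas (\ref{bochner3}) and (\ref{bochner4}) with $\delta h=0$, use $\mathring{R}h=\mathring{W}h-\frac{\mu}{n-1}h$ on trace-free tensors, drop the nonnegative terms $\left\|D_1h\right\|_{L^2}^2$ and $\left\|D_2h\right\|_{L^2}^2$, and bound the Weyl term by $\left\|w\right\|_{\infty}\left\|h\right\|_{L^2}^2$. Your added remarks --- the Rayleigh-quotient characterization of $\lambda$ and the pointwise-to-$L^2$ passage for $\mathring{W}$ --- are exactly the (implicit) glue the paper relies on when it calls the proposition an obvious consequence.
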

\noindent
 Now, Theorem \ref{supremeweyl} is an obvious consequence.
The following result is similar in its form:
\begin{thm}[{{\cite[Theorem 1]{IN05}}}]Let $(M,g)$ be a compact, connected oriented\index{oriented} Einstein manifold with negative Einstein constant $\mu$. If
\begin{align}\label{INcondition}\sup_{p\in M}\overline{w}(p)<-\frac{\mu}{n-1},
\end{align}
then $(M,g)$ is strictly stable. Here, $\overline{w}(p)$ be the largest eigenvalue of the Weyl curvature operator at $p\in M$.
\end{thm}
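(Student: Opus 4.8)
The plan is to deduce this statement from the criterion already contained in Theorem \ref{supremeweyl} by means of a purely pointwise comparison between the two natural actions of the Weyl tensor: the action $\mathring{W}$ on symmetric $(0,2)$-tensors, whose largest eigenvalue is the function $w$ of \eqref{functionw}, and the action $\hat{W}$ on two-forms, whose largest eigenvalue is $\overline w$. Since $\mu<0$, the relevant half of Proposition \ref{Weylproposition} is the one coming from \eqref{bochner4}; inserting $\mathring{R}h=\mathring{W}h-\frac{\mu}{n-1}h$ for traceless $h$, it reads
\begin{align*}(\Delta_Eh,h)=\left\|D_2h\right\|_{L^2}^2-\mu\frac{n-2}{n-1}\left\|h\right\|_{L^2}^2-(\mathring{W}h,h)\end{align*}
on $TT$-tensors. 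Hence strict stability is guaranteed as soon as $\left\|w\right\|_{\infty}<-\mu\frac{n-2}{n-1}$, and the whole theorem reduces to the algebraic inequality
\begin{align*}w(p)\le(n-2)\,\overline w(p)\qquad\text{for every }p\in M,\end{align*}
because then the hypothesis \eqref{INcondition}, i.e.\ $\sup_p\overline w(p)<-\mu/(n-1)$, forces precisely $\left\|w\right\|_{\infty}<-\mu\frac{n-2}{n-1}$ and Theorem \ref{supremeweyl} applies.

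To establish this key inequality I fix $p$, pick a unit tensor $h\in(S^2_gM)_p$ realizing $w(p)$, and diagonalize it in an orthonormal eigenbasis $\{e_i\}$ with eigenvalues $\lambda_i$, so that $\sum_i\lambda_i=0$ and $\sum_i\lambda_i^2=1$. Computing the diagonal entries of $\mathring{W}h$ in this basis gives the compact expression
\begin{align*}\langle\mathring{W}h,h\rangle=\sum_{i<a}\sigma_{ia}(\lambda_i-\lambda_a)^2,\qquad\sigma_{ia}=W(e_i,e_a,e_i,e_a),\end{align*}
where the $\sigma_{ia}$ are exactly the diagonal entries of $\hat{W}$ with respect to the induced orthonormal basis $\{e_i\wedge e_a\}$ of $\Lambda^2T_p^*M$. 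Two structural facts are immediately available: each $\sigma_{ia}\le\overline w(p)$, being a diagonal entry of $\hat{W}$, and $\sum_a\sigma_{ia}=0$ for every $i$, since the Weyl tensor is trace-free (the vanishing already exploited in Lemma \ref{traceW}). Using only these two facts together with $\sum_{i<a}(\lambda_i-\lambda_a)^2=n\,|h|^2$ yields the crude bound $w\le n\,\overline w$.

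The main obstacle is the passage from this crude constant $n$ to the sharp constant $n-2$. The diagonal bounds $\sigma_{ia}\le\overline w$ together with the trace condition $\sum_a\sigma_{ia}=0$ are genuinely insufficient: one can place the resulting ``defect'' $\overline w-\sigma_{ia}\ge0$ entirely on index pairs $(i,a)$ with $\lambda_i=\lambda_a$, which leaves $\langle\mathring{W}h,h\rangle=n\,\overline w\,|h|^2$ untouched. To gain the missing factor one must therefore use the full operator inequality $\hat{W}\le\overline w\cdot\mathrm{id}$ on all of $\Lambda^2T_p^*M$, which additionally controls the off-diagonal components $\langle\hat{W}(e_i\wedge e_a),e_j\wedge e_b\rangle$, together with the first Bianchi identity of the Weyl tensor, which ties these off-diagonal components to the diagonal ones and is exactly what distinguishes a curvature operator from an arbitrary symmetric endomorphism of $\Lambda^2$. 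Converting this combined input into the clean estimate $w\le(n-2)\,\overline w$ is where essentially all the work of the proof sits; the compactness, connectedness and orientability hypotheses enter only to make $\sup_p\overline w$ finite and to provide the global spectral setup for $\Delta_E$ underlying Definition \ref{stability}. Once $w\le(n-2)\,\overline w$ is in hand, the conclusion follows at once from Theorem \ref{supremeweyl} as explained above.
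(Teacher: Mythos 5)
Your reduction itself is sound: for $\mu<0$, Theorem \ref{supremeweyl} (equivalently the second half of Proposition \ref{Weylproposition}, coming from \eqref{bochner4}) gives strict stability as soon as $\left\|w\right\|_{\infty}<-\mu\frac{n-2}{n-1}$, so the theorem would indeed follow from the pointwise bound $w(p)\le(n-2)\,\overline{w}(p)$. The genuine gap is that you never prove this bound: you derive a purported crude bound with constant $n$, argue that your two structural facts cannot do better, and then defer the passage from $n$ to $n-2$ to an unspecified argument ``where essentially all the work of the proof sits.'' A proposal whose quantitative core is declared rather than carried out is not a proof. Note also that the paper does not argue this way at all: since $\hat{R}=\hat{W}+\frac{\mu}{n-1}\identity_{\Lambda^2M}$, hypothesis \eqref{INcondition} is equivalent to negativity of the Riemann curvature operator, hence implies negative sectional curvature, and strict stability is then immediate from Corollary \ref{stabilitywhenK<0}; indeed the paper's point is that the theorem is subsumed by that corollary.

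There is moreover a sign error in the one computation you do carry out, and it is exactly what led you to misjudge the obstacle. Your identity $\langle\mathring{W}h,h\rangle=\sum_{i<a}\sigma_{ia}(\lambda_i-\lambda_a)^2$ with $\sigma_{ia}=W(e_i,e_a,e_i,e_a)$ is correct, but with the paper's convention $\langle\hat{W}(X\wedge Y),Z\wedge V\rangle=W(Y,X,Z,V)$ (the convention under which \eqref{INcondition} means $\hat{R}<0$) the diagonal entries of $\hat{W}$ are $\langle\hat{W}(e_i\wedge e_a),e_i\wedge e_a\rangle=W(e_i,e_a,e_a,e_i)=-\sigma_{ia}$, not $\sigma_{ia}$. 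Hence $\hat{W}\le\overline{w}\,\identity$ yields the \emph{lower} bound $\sigma_{ia}\ge-\overline{w}$, and your crude bound $w\le n\overline{w}$ does not follow from your argument. Ironically, once the sign is fixed, the two facts you dismissed as insufficient, namely $W(e_i,e_a,e_a,e_i)\le\overline{w}$ and $\sum_{a}W(e_i,e_a,e_a,e_i)=0$ (Lemma \ref{traceW}), do suffice and give the sharp constant. Write $c_{ia}=\overline{w}-W(e_i,e_a,e_a,e_i)\ge0$, so that $\sum_{a\ne i}c_{ia}=(n-1)\overline{w}$ for every $i$; then for diagonal $h$ with $\sum_i\lambda_i=0$ and $\sum_i\lambda_i^2=1$ (so that $\sum_{i<a}(\lambda_i-\lambda_a)^2=n$),
\begin{align*}
\langle\mathring{W}h,h\rangle&=-\sum_{i<a}W(e_i,e_a,e_a,e_i)(\lambda_i-\lambda_a)^2=-n\overline{w}+\sum_{i<a}c_{ia}(\lambda_i-\lambda_a)^2\\
&\le-n\overline{w}+2\sum_i\lambda_i^2\sum_{a\ne i}c_{ia}=-n\overline{w}+2(n-1)\overline{w}=(n-2)\,\overline{w},
\end{align*}
using only $(\lambda_i-\lambda_a)^2\le2(\lambda_i^2+\lambda_a^2)$ and the symmetry $c_{ia}=c_{ai}$; no off-diagonal entries of $\hat{W}$ and no Bianchi identity are needed, and your scenario of hiding the defect on pairs with $\lambda_i=\lambda_a$ is incompatible with the row-sum condition once the signs are right. (The constant $n-2$ is attained, e.g., by the Weyl tensor of a product of two equal spheres, so it cannot be improved.) So your route can in fact be completed, and it then yields a purely algebraic comparison $w\le(n-2)\overline{w}$ that is independent of the Einstein equation; but as submitted, the key inequality is unproven and the one concrete step supporting it is wrong in the paper's conventions.
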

\noindent
However, this condition is equivalent to the condition that the Riemann curvature operator on $(M,g)$ is negative. By Corollary \ref{stabilitywhenK<0}, strict stability holds under the weaker condition of negative sectional curvature.

It seems not convenient to formulate stability criterions in terms of the curvature operator. In the proof of the above theorem, the very rough estimate $\max W_{ijji}\leq \overline{w}(p)$ is used and we find no other way to estimate
$w(p)$ in terms of $\overline{w}(p)$.

   We now give a different stability criterion involving an integral of the function $w$. The main tool we use here is the Sobolev inequality which holds for Yamabe metrics.
 Recall that a metric is called Yamabe if it realizes the Yamabe metric in its conformal class, given by
\begin{align*}Y([g])=\inf_{\tilde{g}\in[g]}\volume(M,\tilde{g})^{(2-n)/n}\int_M \scal_{\tilde{g}}\dv_{\tilde{g}}.
\end{align*}
 \begin{prop}[Sobolev inequality]\index{Sobolev!inequality}Let $(M,g)$ be a Yamabe metric\index{Yamabe!metric} in a conformal class\index{conformal!class} and suppose that $\volume(M,g)=1$. Then for any $f\in H^1(M)$,\index{$H^1(M)$, Sobolev space of functions on $M$}
 \begin{align}\label{sobolev}4\frac{n-1}{n-2}\left\|\nabla f\right\|^2_{L^2}\geq\scal\left\{\left\|f\right\|^2_{L^p}-\left\|f\right\|^2_{L^2}\right\},
 \end{align}
 where $p=2n/(n-2)$.
 \end{prop}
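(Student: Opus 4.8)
The plan is to combine the variational characterization of a Yamabe metric with the transformation law of the scalar curvature under a conformal change. First I would parametrize an arbitrary metric in the conformal class $[g]$ by a positive function, writing $\tilde{g}=u^{4/(n-2)}g$ with $u>0$. With the sign convention $\Delta=-\trace\nabla^2$ used here, the conformal Laplacian gives $\scal_{\tilde{g}}=u^{-(n+2)/(n-2)}\big(4\tfrac{n-1}{n-2}\Delta_g u+\scal_g u\big)$ while $\dv_{\tilde{g}}=u^{2n/(n-2)}\dv_g$. Substituting these into the numerator, integrating by parts via $\int_M u\,\Delta_g u\dv_g=\|\nabla u\|_{L^2}^2$, and computing $\volume(M,\tilde{g})=\|u\|_{L^p}^p$ with $p=2n/(n-2)$, the volume-normalized total scalar curvature becomes
\begin{align*}
\volume(M,\tilde{g})^{(2-n)/n}\int_M\scal_{\tilde{g}}\dv_{\tilde{g}}=\frac{\int_M\big(4\tfrac{n-1}{n-2}|\nabla u|^2+\scal_g u^2\big)\dv_g}{\|u\|_{L^p}^2}=:Q_g(u),
\end{align*}
since the exponent $p\cdot\tfrac{2-n}{n}=-2$ turns the volume factor into $\|u\|_{L^p}^{-2}$.

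Since $g$ minimizes the Yamabe functional in its conformal class, the minimum of $Q_g$ is attained at $u\equiv1$, so $Q_g(u)\geq Q_g(1)$ for every positive $u$. I would then use two standard facts: a Yamabe minimizer has constant scalar curvature (it solves the Yamabe equation), and $\volume(M,g)=1$. These give $\|1\|_{L^p}^2=1$, $\int_M\scal_g u^2\dv_g=\scal\|u\|_{L^2}^2$, and $Q_g(1)=\int_M\scal_g\dv_g=\scal$. The inequality $Q_g(u)\geq Q_g(1)$ then rearranges directly to
\begin{align*}
4\tfrac{n-1}{n-2}\|\nabla u\|_{L^2}^2\geq\scal\big(\|u\|_{L^p}^2-\|u\|_{L^2}^2\big)
\end{align*}
for all positive $u$, which is the asserted inequality for nonnegative functions.

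It remains to extend this to arbitrary $f\in H^1(M)$. Because the variational comparison applies only to conformal factors, i.e.\ to positive functions, the delicate point is passing to functions of arbitrary sign; this is exactly where I expect the only real obstacle to lie. I would apply the inequality to $|f|$ and note that $\||f|\|_{L^p}=\|f\|_{L^p}$ and $\||f|\|_{L^2}=\|f\|_{L^2}$ leave the right-hand side unchanged, while Kato's inequality $\|\nabla|f|\|_{L^2}\leq\|\nabla f\|_{L^2}$ together with the positive coefficient $4\tfrac{n-1}{n-2}$ only increases the left-hand side; a density argument approximating $H^1$ functions by smooth ones then closes the gap. I consider the extension across the sign of $f$, and the justification that the minimizer has constant scalar curvature, to be the steps deserving genuine care, whereas the conformal bookkeeping in the first two paragraphs is routine.
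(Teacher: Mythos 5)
Your proposal is correct and follows exactly the route the paper intends: the paper's proof is just the one-line remark that the inequality ``follows easily from the definition of Yamabe metrics'' (citing Itoh--Satoh), and your argument spells out precisely that derivation --- the conformal transformation law, the Yamabe quotient $Q_g(u)\geq Q_g(1)=\scal$, and the extension to arbitrary $f\in H^1(M)$ via $\|\nabla|f|\|_{L^2}\leq\|\nabla f\|_{L^2}$ and density. The details you flag as delicate (constant scalar curvature of the minimizer, passage from positive $u$ to general $f$) are handled correctly, so there is nothing to add.
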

 \begin{proof}This follows easily from the definition of Yamabe metrics, see e.g.\ \cite[p.\ 140]{ItSa02}.
 \end{proof}
 \begin{rem}The inequality holds if $f$ is replaced by any tensor $T$ because of Kato's inequality\index{Kato's inequality}
 \begin{align}|\nabla|T||\leq|\nabla T|.
 \end{align}
 \end{rem}
\noindent
 Since any Einstein metric is Yamabe (see e.g.\ \cite[p.~329]{LeB99}), the Sobolev inequaliy\index{Sobolev!inequality} holds in this case.
 Now we are ready to prove Theorem \ref{integralweyl}:
 \begin{proof}[Proof of Theorem \ref{integralweyl}]Both sides of the inequality in the statement are scale-invariant\index{scale-invariance}, see Lemma \ref{scalingofb} below.
 Therefore, we may assume $\volume(M,g)=1$ from now on.
 First, we estimate the largest eigenvalue of the Weyl tensor action by
 \begin{align*}(\mathring{W}h,h)_{L^2}\leq\int_M |w| |h|^2 \dv                                      &\leq\left\|w\right\|_{L^{n/2}}\left\|h\right\|^2_{L^{2n/(n-2)}}
                                      \leq\left\|w\right\|_{L^{n/2}}\left(4\frac{n-1}{\mu n(n-2)}\left\|\nabla h\right\|^2_{L^2}+\left\|h\right\|^2_{L^2}\right).
 \end{align*}
 We used the H\"older inequality\index{H\"older inequality} and the Sobolev inequality\index{Sobolev!inequality}. With the estimate obtained, we can proceed as follows:
 \begin{align*}(\Delta_Eh,h)_{L^2}&=\left\|\nabla h\right\|^2_{L^2}-2(\mathring{R}h,h)_{L^2}\\
                                           &=\left\|\nabla h\right\|^2_{L^2}+2\frac{\mu}{n-1}\left\|h\right\|^2_{L^2}-2(\mathring{W}h,h)_{L^2}\\
                                           &\geq\left\|\nabla h\right\|^2_{L^2}+2\frac{\mu}{n-1}\left\|h\right\|^2_{L^2}
                                            -2 \left\|w\right\|_{L^{n/2}}\left(4\frac{n-1}{\mu n(n-2)}\left\|\nabla h\right\|^2_{L^2}+\left\|h\right\|^2_{L^2}\right)\\
                                           &=\left(1-8\frac{n-1}{\mu n(n-2)}\left\|w\right\|_{L^{n/2}}\right)\left\|\nabla h\right\|^2_{L^2}
                                            +2\left(\frac{\mu}{n-1}-\left\|w\right\|_{L^{n/2}}\right)\left\|h\right\|^2_{L^2}
 \end{align*}
 The first term on the right hand side is nonnegative by the assumption on $w$.
 It remains to estimate $\left\|\nabla h\right\|^2_{L^2}$. This can be done by using (\ref{bochner3}). We have
 \begin{align*}\left\|\nabla h\right\|^2_{L^2}&=\left\|D_1 h\right\|^2_{L^2}+2\mu \left\| h\right\|_{L^2}^2-2(\mathring{R}h,h)_{L^2}\\
                                              &\geq2\mu \left\| h\right\|_{L^2}^2+2\frac{\mu}{n-1}\left\| h\right\|_{L^2}^2-2(\mathring{W}h,h)_{L^2}\\
                                              &\geq2\mu\frac{n}{n-1}\left\| h\right\|_{L^2}^2-2\left\|w\right\|_{L^{n/2}}
                                              \left(4\frac{n-1}{\mu n(n-2)}\left\|\nabla h\right\|^2_{L^2}+\left\|h\right\|^2_{L^2}\right)\\
                                              &=2\left(\mu\frac{n}{n-1}-\left\|w\right\|_{L^{n/2}}\right)\left\| h\right\|^2_{L^2}-8
                                              \frac{n-1}{\mu n(n-2)}\left\|w\right\|_{L^{n/2}}\left\|\nabla h\right\|^2_{L^2},
 \end{align*} 
 and therefore, $\left\|\nabla h\right\|^2_{L^2}$ can be estimated by
 \begin{align*}\left\|\nabla h\right\|^2_{L^2}\geq 2\left(\mu\frac{n}{n-1}-\left\|w\right\|_{L^{n/2}}\right)\left(1+8\frac{n-1}{\mu n(n-2)}\left\|w\right\|_{L^{n/2}}\right)^{-1}
 \left\|h\right\|^2_{L^2}.
 \end{align*}
 Combining these arguments, we obtain 
 \begin{equation}\begin{split}\label{integralestimate}(\Delta_Eh,h)_{L^2}\geq&\bigg\{2\left(1-8\frac{n-1}{\mu n(n-2)}\left\|w\right\|_{L^{n/2}}\right)\left(\mu\frac{n}{n-1}-\left\|w\right\|_{L^{n/2}}\right)\cdot\\
                                      &\left(1+8\frac{n-1}{\mu n(n-2)}\left\|w\right\|_{L^{n/2}}\right)^{-1}+2\left(\frac{\mu}{n-1}-\left\|w\right\|_{L^{n/2}}\right)\bigg\}\left\|h\right\|^2_{L^2}.
\end{split} 
\end{equation}
 The manifold $(M,g)$ is stable if the right-hand side of this inequality is nonnegative. It is elementary to check that this is equivalent to
 \begin{align*}\left\|w\right\|_{L^{n/2}}\leq \mu\frac{n+1}{2(n-1)}\left(\frac{4(n-1)}{n(n-2)}+1\right)^{-1}.
  \end{align*}
The assertion about strict stability is also immediate.
 \end{proof}
 \begin{lem}\label{scalingofb}The $L^{n/2}$-Norm of the function $w$ is conformally invariant\index{conformally!invariant}.
 \end{lem}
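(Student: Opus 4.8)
The plan is to track how each ingredient entering $\|w\|_{L^{n/2}}$ transforms under a conformal change $\tilde g = e^{2f}g$, and to observe that the exponent $n/2$ is exactly the one for which all conformal factors cancel. First I would recall the standard fact that the Weyl tensor, viewed as a $(1,3)$-tensor, is conformally invariant; lowering one index with $\tilde g$ then yields $\tilde W = e^{2f}W$ for the associated $(0,4)$-tensors. This is the only input about the Weyl tensor that I need.

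Next I would compute the pointwise scaling of the function $w$. Fixing a $g$-orthonormal frame $\{e_i\}$, the family $\{e^{-f}e_i\}$ is $\tilde g$-orthonormal, so from the defining formula for the Weyl action together with $\tilde W = e^{2f}W$ one finds $\mathring{\tilde W}\eta = e^{-2f}\mathring W\eta$ for every symmetric $(0,2)$-tensor $\eta$: the factor $e^{2f}$ from $\tilde W$ is cancelled by the two contractions against the rescaled frame, leaving a single $e^{-2f}$ from the remaining pair of contractions. Since the induced inner product on $(0,2)$-tensors rescales by $e^{-4f}$, the Rayleigh quotient defining $w$ in \eqref{functionw} satisfies
\[ \frac{\langle \mathring{\tilde W}\eta,\eta\rangle_{\tilde g}}{|\eta|^2_{\tilde g}} = \frac{e^{-4f}\langle e^{-2f}\mathring W\eta,\eta\rangle_{g}}{e^{-4f}|\eta|^2_{g}} = e^{-2f}\,\frac{\langle \mathring W\eta,\eta\rangle_{g}}{|\eta|^2_{g}}. \]
As $e^{-2f(p)}>0$ is constant in $\eta$ at each fixed point, taking the supremum over $\eta$ gives $\tilde w = e^{-2f}w$ pointwise.

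Finally I would combine this with the scaling $dV_{\tilde g} = e^{nf}\,dV_g$ of the volume density, which produces the desired cancellation:
\[ \int_M |\tilde w|^{n/2}\,dV_{\tilde g} = \int_M e^{-nf}\,|w|^{n/2}\,e^{nf}\,dV_g = \int_M |w|^{n/2}\,dV_g, \]
so that $\|w\|_{L^{n/2}}$ is unchanged under the conformal change.

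I do not expect a genuine obstacle here, since the argument is bookkeeping of conformal weights; the only step requiring care is the scaling of the Weyl action $\mathring W$, where the weight $+2$ of $\tilde W$ must be reconciled against the weights of the frame contractions. The conceptual point is dimensional: $w$ carries conformal weight $-2$ while $dV$ carries weight $+n$, so the balance $(-2)\cdot\tfrac n2 + n = 0$ singles out the critical exponent $n/2$. The scale invariance invoked at the start of the proof of Theorem \ref{integralweyl} is simply the special case of constant $f$, and the same computation shows there that the right-hand side $\mu\cdot\volume(M,g)^{2/n}\cdot\frac{n+1}{2(n-1)}\left(\frac{4(n-1)}{n(n-2)}+1\right)^{-1}$ is scale invariant as well, since $\mu$ rescales by $e^{-2f}$ under a constant conformal factor.
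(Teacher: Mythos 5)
Your proof is correct and takes essentially the same route as the paper's: both derive the pointwise scaling $\tilde w = e^{-2f}w$ (the paper writes $\tilde g = f\cdot g$ and $\tilde w = f^{-1}w$) from the conformal weight of the $(0,4)$-Weyl tensor together with the scaling of the inner product on symmetric $(0,2)$-tensors, and then cancel against $dV_{\tilde g}=e^{nf}\,dV_g$ at the critical exponent $n/2$. The differences are purely notational (exponential versus multiplicative conformal factor), plus your closing remark on the scale invariance of the right-hand side of Theorem \ref{integralweyl}, which matches what the paper invokes there.
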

 \begin{proof}Let $g,\tilde{g}$ be conformally equivalent\index{conformally!equivalent}, i.e.\ $\tilde{g}=f\cdot g$ for a smooth positive function $f$. 
Let $W$ and $\tilde{W}$ be the Weyl tensors\index{Weyl curvature tensor} of the metrics $g$ and $\tilde{g}$, respectively.
 It is well-known that $\tilde{W}=f\cdot W$ when considered as $(0,4)$-tensors. Therefore,
 \begin{align*}\langle\mathring{\tilde{W}}h,h\rangle_{\tilde{g}}
=f^{-3}\langle \mathring{W}h,h\rangle_g.
 \end{align*}
 Furthermore, we have 
 \begin{align*}|h|_{\tilde{g}}^2=f^{-2}|h|_{g}^2,\qquad \dv_{\tilde{g}}=f^{n/2}\dv_g.
 \end{align*}
 We now see that $\tilde{w}=f^{-1}w$ and
 \begin{align*}\left\|\tilde{w}\right\|^{2/n}_{L^{n/2}(\tilde{g})}=\int_M \tilde{w}^{n/2} \dv_{\tilde{g}}=\int_M w^{n/2} \dv_{g}=
 \left\|w\right\|^{2/n}_{L^{n/2}(g)},
 \end{align*}
 which shows the lemma.
 \end{proof}
 \begin{cor}Let $(M,g)$ be a Riemannian manifold and let $Y([g])$ be the Yamabe constant\index{Yamabe!constant}\index{$Y([g])$, Yamabe constant of $[g]$}
 of the conformal class\index{conformal!class} of $g$. If 
 \begin{equation}\left\|w\right\|_{L^{n/2}(g)}\leq Y([g])\frac{n+1}{2n(n-1)}\cdot\left(\frac{4(n-1)}{n(n-2)}+1\right)^{-1},
 \end{equation}
 any Einstein metric in the conformal class of $g$ is stable.
 \end{cor}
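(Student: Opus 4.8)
The plan is to reduce the statement to Theorem~\ref{integralweyl} applied to an arbitrary Einstein representative of the conformal class, exploiting that both sides of the hypothesis are conformal invariants. The whole point is that the factor $\mu\cdot\volume(M,\tilde{g})^{2/n}$ appearing in Theorem~\ref{integralweyl} is exactly $Y([g])/n$.

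First I would fix an Einstein metric $\tilde{g}\in[g]$ with Einstein constant $\mu$, so that $\scal_{\tilde{g}}=n\mu$ is constant. Since every Einstein metric is Yamabe, $\tilde{g}$ realizes the infimum defining $Y([g])$, and evaluating the Yamabe functional at the constant-scalar-curvature metric $\tilde{g}$ gives
\begin{align*}Y([g])=\volume(M,\tilde{g})^{(2-n)/n}\int_M\scal_{\tilde{g}}\dv_{\tilde{g}}=n\mu\cdot\volume(M,\tilde{g})^{2/n},\end{align*}
using $(2-n)/n+1=2/n$. Equivalently, $\mu\cdot\volume(M,\tilde{g})^{2/n}=Y([g])/n$.

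Next I would substitute this identity into the bound of Theorem~\ref{integralweyl} for $\tilde{g}$: its right-hand side
\begin{align*}\mu\cdot\volume(M,\tilde{g})^{2/n}\cdot\frac{n+1}{2(n-1)}\left(\frac{4(n-1)}{n(n-2)}+1\right)^{-1}\end{align*}
becomes exactly $Y([g])\frac{n+1}{2n(n-1)}\left(\frac{4(n-1)}{n(n-2)}+1\right)^{-1}$, the right-hand side of the corollary. On the left-hand side, Lemma~\ref{scalingofb} gives $\|w\|_{L^{n/2}(\tilde{g})}=\|w\|_{L^{n/2}(g)}$. Hence the hypothesis of the corollary is precisely the hypothesis of Theorem~\ref{integralweyl} for $\tilde{g}$, so $\tilde{g}$ is stable. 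As $\tilde{g}$ was an arbitrary Einstein metric in $[g]$, the claim follows.

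The only point requiring care — and the one I expect to be the sole real subtlety, since no analytic difficulty remains — is the sign of $\mu$, as Theorem~\ref{integralweyl} assumes $\mu>0$. If the corollary's hypothesis holds with $Y([g])>0$, then $\mu>0$ automatically by the displayed identity and the argument above applies verbatim. If $Y([g])<0$, the right-hand side of the hypothesis is negative while $\|w\|_{L^{n/2}}\geq0$, so the hypothesis is never met and the implication is vacuously true. If $Y([g])=0$, the hypothesis forces $\|w\|_{L^{n/2}}=0$, hence $w\equiv0$ and $W\equiv0$ by Lemma~\ref{traceW}; a Ricci-flat, conformally flat metric is flat, so $\mathring{R}=0$ and $\Delta_E=\nabla^*\nabla\geq0$, giving stability directly.
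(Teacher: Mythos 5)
Your proof is correct and follows essentially the same route as the paper's: fix an Einstein representative $\tilde g\in[g]$, use that Einstein metrics are Yamabe to identify $\mu\cdot\volume(M,\tilde g)^{2/n}=Y([g])/n$, invoke the conformal invariance of $\left\|w\right\|_{L^{n/2}}$ (Lemma~\ref{scalingofb}), and apply Theorem~\ref{integralweyl}. Your closing case analysis for $Y([g])\leq 0$ is also sound and in fact patches a point the paper's proof passes over silently, since Theorem~\ref{integralweyl} assumes $\mu>0$.
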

 \begin{proof}Suppose that $\tilde{g}\in[g]$ is Einstein. 
 We know that $\tilde{g}$ is a \index{Yamabe!metric}Yamabe metric in the conformal class of $g$. By the definition of the Yamabe constant, the Einstein constant of $\tilde{g}$ equals
 \begin{align*}\mu=\frac{1}{n}\cdot Y([g])\cdot \volume(M,g)^{2/n},
 \end{align*}
 and Lemma \ref{scalingofb} yields 
 \begin{align*}\left\|\tilde{w}\right\|_{L^{n/2}(\tilde{g})}=\left\|w\right\|_{L^{n/2}(g)}\leq \mu\cdot\volume(M,g)^{2/n}\cdot\frac{n+1}{2(n-1)}\cdot\left(\frac{4(n-1)}{n(n-2)}+1\right)^{-1}.
 \end{align*}
 The assertion now follows from Theorem \ref{integralweyl}.
 \end{proof}
 By Theorem \ref{integralweyl} and the Cauchy-Schwarz inequality, any positive Einstein manifold of unit volume\index{volume!unit} is stable, if
  \begin{align}\label{integralweyl2}\left\|W\right\|_{L^{n/2}}\leq \mu\cdot\frac{n+1}{2(n-1)}\left(\frac{4(n-1)}{n(n-2)}+1\right)^{-1}.
 \end{align}
\noindent
On the other hand, we have the following isolation theorem for the Weyl tensor:
 \begin{thm}[{{\cite[Main theorem]{ItSa02}}}]Let $(M,g)$ be a compact connected, oriented Einstein-manifold, $n\geq4$, with positive
 Einstein constant $\mu$ and of unit-volume. Then there exists a constant $C(n)$, depending only on $n$,
 such that if the inequality $\left\|W\right\|_{L^{n/2}}<C(n)\mu$ holds, then $W=0$ so that $(M,g)$ is a finite isometric quotient of the sphere.
 \end{thm}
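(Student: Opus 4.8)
The plan is to run a Bochner--Weitzenböck argument for the Weyl tensor, exploiting that $\|W\|_{L^{n/2}}$ is precisely the scaling-critical quantity controlled by the Yamabe--Sobolev inequality already at our disposal. First I would record that on an Einstein manifold the Weyl tensor is divergence free: since $\nabla\ric=0$, contracting the second Bianchi identity gives $\delta W=0$, so $W$ is a harmonic curvature-type tensor. For such a tensor the Weitzenböck formula reduces to an identity of the shape
\begin{align}\label{weitzW}
\nabla^*\nabla W = -c_n\,\mu\, W + Q(W),
\end{align}
where $c_n>0$ depends only on $n$ and $Q$ is a fixed algebraic quadratic expression in $W$ (the self-action of the Weyl tensor). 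Pairing \eqref{weitzW} with $W$ and integrating over the compact manifold yields the coercive identity
\begin{align}\label{coerciveW}
\|\nabla W\|_{L^2}^2 + c_n\,\mu\,\|W\|_{L^2}^2 = \int_M \langle Q(W),W\rangle\dv,
\end{align}
whose left-hand side is manifestly nonnegative because $\mu>0$.

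Next I would estimate the cubic right-hand side purely algebraically, $|\langle Q(W),W\rangle|\le b_n|W|^3$ with $b_n=b_n(n)$, and then control $\int_M|W|^3$ by the critical Sobolev norm. Writing $|W|^3=|W|\cdot|W|^2$ and applying Hölder with the conjugate exponents $n/2$ and $n/(n-2)$ gives $\int_M|W|^3\dv\le\|W\|_{L^{n/2}}\,\|W\|_{L^{2n/(n-2)}}^2$. Since an Einstein metric is Yamabe, the Sobolev inequality \eqref{sobolev}, applied to $|W|$ via Kato's inequality with $\scal=n\mu$ and $\volume(M,g)=1$, bounds
\begin{align*}
\|W\|_{L^{2n/(n-2)}}^2 \le \frac{4(n-1)}{n(n-2)\mu}\|\nabla W\|_{L^2}^2 + \|W\|_{L^2}^2.
\end{align*}

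Combining these with \eqref{coerciveW} produces an inequality of the form
\begin{align*}
\Big(1-\tfrac{b_n}{\mu}\tfrac{4(n-1)}{n(n-2)}\|W\|_{L^{n/2}}\Big)\|\nabla W\|_{L^2}^2 +\big(c_n\mu-b_n\|W\|_{L^{n/2}}\big)\|W\|_{L^2}^2 \le 0.
\end{align*}
Setting $C(n)=\min\{\tfrac{n(n-2)}{4(n-1)b_n},\tfrac{c_n}{b_n}\}$, the hypothesis $\|W\|_{L^{n/2}}<C(n)\mu$ makes both coefficients strictly positive, forcing $W\equiv0$. By the Einstein decomposition \eqref{riccidecomposition2}, $W=0$ gives $R=\frac{\mu}{2(n-1)}(g\owedge g)$, i.e.\ constant sectional curvature $\frac{\mu}{n-1}>0$; a compact connected space form of positive curvature is a finite isometric quotient of the round sphere, which finishes the proof.

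The main obstacle I anticipate is not the functional-analytic part but pinning down the Weitzenböck formula \eqref{weitzW} with the correct sign and the explicit constant $c_n>0$: one must contract the second Bianchi identity for the full curvature tensor, substitute \eqref{riccidecomposition2}, and separate the Ricci (linear, sign-definite for $\mu>0$) contribution from the Weyl (quadratic) contribution so that \eqref{coerciveW} genuinely comes out coercive. Once the linear term is shown to enter with the favorable sign, the absorption argument and the extraction of the dimensional constant $C(n)$ are routine; note also that this quadratic term is exactly what allows non-conformally-flat positive Einstein manifolds (such as $\CP^2$) to evade the conclusion, since for them $\|W\|_{L^{n/2}}$ is large.
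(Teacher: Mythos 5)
The paper itself gives no proof of this statement---it is quoted from \cite{ItSa02}---but your proposal is essentially a reconstruction of that source's argument: divergence-free Weyl tensor on an Einstein manifold, Weitzenb\"ock identity with a coercive linear term, H\"older plus the Yamabe--Sobolev inequality applied via Kato, absorption of the cubic term, and the positive space-form conclusion. The one ingredient you assume rather than derive, the Weitzenb\"ock identity with $c_n>0$, does hold for Einstein metrics (the linear term is $\tfrac{2}{n}\scal\, W=2\mu W$, so $c_n=2$), and tracking the algebraic constant $b_n$ in the cubic estimate is exactly what produces the explicit thresholds \eqref{weylisolation} that the paper records after the theorem.
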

\noindent
 A careful investigation of the proof shows that $W$ vanishes if
 \begin{align}\label{weylisolation}\left\|W\right\|_{L^{n/2}}\leq\left\{\begin{array}{l l}\frac{n(n-2)}{24(n-1)}\mu & \quad\text{if }4\leq n\leq9,\\
                                              \frac{1}{3}\mu & \quad\text{if }n\geq10.
 \end{array} \right.
 \end{align}
\noindent
A comparison of the last two inequalities shows that \eqref{integralweyl2} is not ruled out by the above isolation theorem.
In dimension $4$, we have another isolation theorem, proven with different techniques:
 \begin{thm}[{{\cite[Theorem 1]{GLeB99}}}]\label{GLtheorem}Let $(M,g)$ be a compact oriented Einstein 4-manifold with constant $\mu>0$ and let $W^+$ be the self-dual part of the Weyl tensor. If $W^+\not\equiv0$,
 then 
\begin{align*}\int_M |W^+|^2  \dv\geq \int_M\frac{8\mu^2}{3}\dv,
\end{align*}
 with equality if and only if $\nabla W^+\equiv0$.
 \end{thm}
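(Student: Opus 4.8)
The plan is to establish this as a sharp $L^2$-pinching estimate for $W^+$, with the Fubini--Study metric on $\CP^2$ as the extremal case, where $W^+$ is parallel with normalized eigenvalues $(-1,-1,2)$ as an operator on $\Lambda^+$. Throughout I write $s=4\mu$ for the scalar curvature, so that $\frac{8\mu^2}{3}=\frac{s^2}{6}$.

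First I would exploit the harmonicity of $W^+$. For an Einstein metric the contracted second Bianchi identity yields $\delta W=0$, and in dimension four the splitting $\Lambda^2=\Lambda^+\oplus\Lambda^-$ under the Hodge star induces $W=W^+\oplus W^-$ with $\delta W^\pm=0$. Hence $W^+$, viewed as a section of the bundle of trace-free symmetric endomorphisms of $\Lambda^+$ (equivalently a harmonic $\Lambda^+$-valued two-form), is a harmonic curvature tensor and obeys a Weitzenb\"ock formula.

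Next I would invoke Derdzinski's Weitzenb\"ock formula on Einstein four-manifolds, which has the shape $\nabla^*\nabla W^+=\frac{s}{2}W^+-Q(W^+)$, where $Q(W^+)$ is the trace-free part of the natural quadratic expression in $W^+$ built from the cross-product algebra on $\Lambda^+\cong\mathfrak{su}(2)$. Pairing with $W^+$ and using the Bochner identity $\frac{1}{2}\Delta|W^+|^2=\langle\nabla^*\nabla W^+,W^+\rangle-|\nabla W^+|^2$ gives a scalar equation whose zeroth-order part consists of a linear term proportional to $s|W^+|^2$ and a cubic term proportional to $\trace((W^+)^3)$. I would control the latter by the sharp algebraic inequality $|\trace((W^+)^3)|\le\tfrac{1}{\sqrt6}|W^+|^3$, valid for trace-free symmetric operators on a three-dimensional space, with equality exactly at the eigenvalue pattern $(-1,-1,2)$.

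The hard part is that integrating this scalar equation directly leaves the full gradient term $\|\nabla W^+\|_{L^2}^2$ on the unfavourable side and only produces a non-sharp constant. To reach the sharp value I would pass to $u=|W^+|$, working on $\{W^+\ne0\}$ or with the regularizations $\sqrt{|W^+|^2+\epsilon}$ as $\epsilon\to0$ to accommodate the zero set where $u$ fails to be smooth, and apply the refined Kato inequality for the harmonic tensor $W^+$, whose sharp constant is strictly better than the naive one; this moves the gradient contribution to the favourable side. Integrating the resulting differential inequality against the constant $s>0$ and inserting the algebraic bound then yields $\int_M|W^+|^2\dv\ge\frac{8\mu^2}{3}\volume(M,g)$. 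The sharpness of the constant — and the main obstacle — is that the refined Kato constant and the algebraic constant must dovetail exactly; equality therefore forces simultaneous equality in both, which pins down $\nabla W^+=0$ together with the pointwise eigenvalue pattern $(-1,-1,2)$, i.e.\ precisely the parallel case $\nabla W^+\equiv0$.
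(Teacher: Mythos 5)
The paper itself contains no proof of this statement---it is quoted verbatim from Gursky--LeBrun \cite{GLeB99}---so your proposal has to be measured against their original argument. You have correctly assembled most of its ingredients: harmonicity $\delta W^+=0$ from the second Bianchi identity, the Weitzenb\"ock formula for $W^+$, the sharp cubic bound $\trace\bigl((\hat{W}^+)^3\bigr)=3\det\hat{W}^+\leq\frac{1}{\sqrt{6}}|\hat{W}^+|^3$ for $W^+$ viewed as a trace-free symmetric endomorphism $\hat{W}^+$ of $\Lambda^+$ (equality at the eigenvalue pattern $(-1,-1,2)$), the refined Kato inequality $|\nabla W^+|^2\geq\frac{5}{3}\bigl|\nabla|W^+|\bigr|^2$, and the regularization near the zero set. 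But the sentence that is supposed to finish the proof---``integrating the resulting differential inequality against the constant $s>0$''---is a non sequitur, and it is exactly there that the real proof has its key idea, which your plan does not contain.

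Concretely, work in the endomorphism normalization ($|\hat{W}^+|^2=\frac{1}{4}|W^+|^2$ in the paper's tensor norm), in which the claimed bound reads $\int|\hat{W}^+|^2\dv\geq\frac{s^2}{24}\volume(M,g)$ with $s=4\mu$, and take your choice $u=|\hat{W}^+|$. The Weitzenb\"ock formula, the cubic bound and refined Kato give, upon integration over the closed manifold,
\begin{align*}
\sqrt{6}\int_M u^3\dv\;\geq\;\frac{5}{3}\int_M|\nabla u|^2\dv+\frac{s}{2}\int_M u^2\dv,
\end{align*}
which is a \emph{lower} bound on the cubic term; to deduce anything about $\int u^2\dv$ one must dominate $\int u^3\dv$ from above, and doing so by H\"older together with the (sharp, Yamabe) Sobolev inequality of $g$ reintroduces a gradient term with coefficient $3$, which beats the coefficient $5/3$ that refined Kato supplies on the other side. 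Near the putative equality case the argument then closes only with a constant strictly below $\frac{s^2}{24}$: with the exponent $1$, the gradient contribution ends up on the \emph{un}favourable side after all. What Gursky and LeBrun actually do is substitute $u=|\hat{W}^+|^{1/3}$---this exponent is forced, since precisely for $\alpha=1/3$ the chain-rule correction $\alpha(\alpha-1)w^{\alpha-2}|\nabla w|^2$ in $\Delta(w^{\alpha})$ is cancelled by the refined-Kato excess---which turns the Weitzenb\"ock formula into the pointwise estimate $6\Delta u+su\leq 2\sqrt{6}\,u^4$ (with the paper's sign of $\Delta$), i.e.\ the conformal metric $\tilde{g}=u^2g$ satisfies $\scal_{\tilde{g}}\leq 2\sqrt{6}\,|\hat{W}^+|_{\tilde{g}}$. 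The sharp constant then comes from three inputs absent from your proposal: Obata's theorem, by which the Einstein metric realizes the Yamabe constant, $Y(M,[g])=s\volume(M,g)^{1/2}$; the conformal invariance of $\int|\hat{W}^+|^2\dv$ in dimension four; and Cauchy--Schwarz,
\begin{align*}
s\volume(M,g)^{1/2}=Y(M,[g])\leq\frac{\int_M\scal_{\tilde{g}}\dv_{\tilde{g}}}{\volume(M,\tilde{g})^{1/2}}\leq 2\sqrt{6}\left(\int_M|\hat{W}^+|^2_{\tilde{g}}\dv_{\tilde{g}}\right)^{1/2}=2\sqrt{6}\left(\int_M|\hat{W}^+|^2\dv\right)^{1/2}.
\end{align*}
Equality forces equality simultaneously in the Kato, cubic and Cauchy--Schwarz steps, and that is what yields $\nabla W^+\equiv0$; your equality discussion, resting on the flawed main step, inherits the gap. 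In short, the refined Kato constant and the algebraic constant ``dovetail'' not by themselves but only through the conformal substitution $|W^+|^{1/3}$ and the Yamabe/Obata argument, and without the latter the constant $\frac{8}{3}\mu^2$ is out of reach.
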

\noindent
Obviously, the same gap theorem holds for the whole Weyl tensor.
By passing to the orientation covering, we see that the same gap also holds for the Weyl tensor on non-orientable manifolds. Unfortunately, this theorem rules \eqref{integralweyl2} out.
 \section{Six-dimensional Einstein manifolds}\label{stabilitysix}
In this section, we compute an explicit representation of the Gauss-Bonnet formula \index{Gauss-Bonnet formula}for six-dimensional Einstein manifolds. We use this representation to show a stability criterion for Einstein manifolds involving the 
Euler characteristic\index{Euler characteristic}.

 The generalized Gauss-Bonnet formula for a compact Riemannian manifold $(M,g)$ of dimension $n=2m$ is
 \begin{align*}\chi(M)=&\frac{(-1)^m}{2^{3m}\pi^m m!}\int_{M}\Psi_g\dv.
 \end{align*}
The function $\Psi_g$ is defined as
\begin{align*}\Psi_g=\sum_{\sigma,\tau\in S_m}\mathrm{sgn}(\sigma)\mathrm{sgn}(\tau)R_{\sigma(1)\sigma(2)\tau(1)\tau(2)}\ldots R_{\sigma(n-1)\sigma(n)\tau(n-1)\tau(n)},
\end{align*}
 where the coefficients are taken with respect to an orthonormal basis (see e.g.\ \cite[Theorem 4.1]{Zhu00}).\index{$S_m$, symmetric group}
 In dimension four, this yields the nice formula
 \begin{equation}\label{gaussbonnetdimensionfour}\chi(M)=\frac{1}{32\pi^2}\int_M (|W|^2+|Sc|^2-|U|^2)\dv
 \end{equation}
 (see also \cite[p.\ 161]{Bes08}).
 Here, $Sc=\frac{\scal}{2n(n-1)}g\owedge g$\index{$Sc$, scalar part of $R$} is the scalar part and $U=\frac{1}{n-2}\ric^0\owedge g$\index{$U$, traceless Ricci part of $R$} is the traceless Ricci part of the curvature tensor. 
 Due to different conventions for the norm of curvature tensors, formula \eqref{gaussbonnetdimensionfour} often appers with the factor $\frac{1}{8\pi^2}$ instead of $\frac{1}{32\pi^2}$.
 On Einstein manifolds, 
 we have $U=0$ and the Gauss-Bonnet formula simplifies to
 \begin{equation}\label{dim4euler}\chi(M)=\frac{1}{32\pi^2}\int_M \left(|W|^2+\frac{8}{3}\mu^2\right)\dv
 \end{equation}
where $\mu$ is the Einstein constant. As a nice consequence, we obtain a topological condition for the existence of Einstein metrics:
 \begin{thm}[{{\cite[p.\ 41]{Ber65}}}]Every compact 4-manifold carrying an Einstein metric $g$ satisfies the inequality
 \begin{align*}\chi(M)\geq0.
 \end{align*}
 Moreover, $\chi(M)=0$ if and only if $(M,g)$ is flat.
 \end{thm}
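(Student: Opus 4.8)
The plan is to read both assertions directly off the Einstein four-manifold Gauss-Bonnet formula \eqref{dim4euler}, which already expresses $\chi(M)$ as an integral with a manifestly nonnegative integrand. First I would observe that $|W|^2+\frac{8}{3}\mu^2$ is a sum of squares and therefore pointwise nonnegative on all of $M$. Integrating and multiplying by the positive constant $\frac{1}{32\pi^2}$ then yields $\chi(M)\geq0$, which is the first claim. No genuine work is required beyond invoking \eqref{dim4euler}.

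For the equality statement I would argue as follows. Since the curvature tensor is continuous, the integrand is continuous and nonnegative, so the vanishing of $\int_M(|W|^2+\frac{8}{3}\mu^2)\dv$ forces the integrand to vanish identically on $M$. As the two summands are separately nonnegative, this gives both $W\equiv0$ and $\mu=0$. Substituting $\mu=0$ and $W=0$ into the Einstein curvature decomposition \eqref{riccidecomposition2}, namely $R=W+\frac{\mu}{2(n-1)}(g\owedge g)$, I would conclude $R\equiv0$, so $(M,g)$ is flat. The converse is immediate: a flat metric is Einstein with $\mu=0$ and satisfies $W=0$, hence the integrand vanishes and $\chi(M)=0$. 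This closes the ``if and only if''.

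There is no serious obstacle here, since the theorem is essentially a corollary of \eqref{dim4euler}. The only points demanding a moment's care are the two implications used in the equality analysis: the passage from the vanishing of a nonnegative integral to the pointwise vanishing of its integrand, which relies on continuity of the curvature tensor, and the observation that vanishing Weyl curvature together with Ricci-flatness already annihilates the \emph{entire} Riemann tensor via \eqref{riccidecomposition2}. Both are routine, so I expect the proof to be very short.
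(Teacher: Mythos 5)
Your proof is correct and is precisely the argument the paper intends: it presents the theorem as ``a nice consequence'' of \eqref{dim4euler}, and your write-up simply fills in the routine details (pointwise nonnegativity of the integrand, and the equality analysis forcing $W\equiv0$ and $\mu=0$, hence $R\equiv0$ via \eqref{riccidecomposition2}). Nothing is missing.
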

Another consequence of (\ref{dim4euler}) is the following: Let $(M,g)$ be of unit volume\index{volume!unit}. Then there exists a constant $C>0$ such that, if $\mu\geq C\cdot \sqrt{\chi(M)}$, the Weyl curvature satisfies $\left\|W\right\|_{L^2}\leq \frac{1}{3}\mu$. This implies
stability by Theorem \ref{integralweyl}. Unfortunately, the same condition on the Weyl tensor\index{Weyl curvature tensor} already implies that it vanishes, as we discussed in the last section.

 In dimension six, an explicit representation of the Gauss-Bonnet formula\index{Gauss-Bonnet formula} is given by
 \begin{align*}\chi(M)=&\frac{1}{384\pi^3}\int_{M}\{\scal^3-12\scal|\ric|^2+3\scal|R|^2+16\langle\ric,\ric\circ\ric\rangle\\
                      &-24\ric^{ij}\ric^{kl}R_{ikjl}-24\ric_i^{\text{ }j}R^{iklm}R_{jklm}+8R^{ijkl}R_{imkn}R^{\text{ }n\text{ }m}_{j\text{ }l\text{ }}-2R^{ijkl}R^{\text{ }\text{ }mn}_{ij}R_{klmn}\}\dv
 \end{align*}
 (see \cite[Lemma 5.5]{Sak71}).
 When $(M,g)$ is Einstein, this integral is equal to
 \begin{equation}\begin{split}\label{gaussbonnetdimensionsix}\chi(M)=&\frac{1}{384\pi^3}\int_{M}\{24\mu^3-6\mu|R|^2+8R^{ijkl}R_{imkn}R^{\text{ }n\text{ }m}_{j\text{ }l\text{ }}-2R^{ijkl}R^{\dbrk mn}_{ij}R_{klmn}\}\dv.
 \end{split}\end{equation}
 \begin{lem}\label{sakailemma}If $(M,g)$ is a compact Einstein manifold with constant $\mu$,
 \begin{align*}\left\|\nabla R\right\|_{L^2}^2=-\int_{M}\{4R^{ijkl}R^{\brk m\brk n}_{i\brk k}R_{jnlm}+2R^{ijkl}R^{\dbrk mn}_{ij}R_{klmn}+2\mu|R|^2\}\dv.
 \end{align*}
 \end{lem}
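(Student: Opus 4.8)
The plan is to start from the definition and integrate by parts once. Writing everything in a local orthonormal frame, we have
\begin{align*}\left\|\nabla R\right\|_{L^2}^2=\int_M \nabla_m R_{ijkl}\,\nabla^m R^{ijkl}\dv=-\int_M R^{ijkl}\,\nabla^m\nabla_m R_{ijkl}\dv,\end{align*}
with no boundary term since $M$ is compact. The entire problem is thereby reduced to expressing the connection Laplacian $\nabla^m\nabla_m R_{ijkl}$ (i.e.\ minus the rough Laplacian applied to $R$) through purely algebraic quadratic curvature expressions, and then contracting with $R^{ijkl}$.

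To do this I would first apply the second Bianchi identity $\nabla_m R_{ijkl}+\nabla_k R_{ijlm}+\nabla_l R_{ijmk}=0$ to the inner derivative, rewriting $\nabla_m R_{ijkl}=-\nabla_k R_{ijlm}-\nabla_l R_{ijmk}$, and then apply $\nabla^m$ and contract. Commuting $\nabla^m$ past $\nabla_k$ (and past $\nabla_l$) via the Ricci identity splits each resulting term into a \emph{principal} piece $\nabla_k(\nabla^m R_{ijlm})$ and a \emph{commutator} piece. The principal pieces vanish: the contracted second Bianchi identity gives $\nabla^m R_{ijlm}=\nabla_i R_{jl}-\nabla_j R_{il}$ up to sign, and on an Einstein manifold $\ric=\mu g$ is parallel, so the divergence of the Riemann tensor on any slot is zero. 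Hence only the curvature commutators survive, and $\nabla^m\nabla_m R_{ijkl}$ becomes a sum of contracted products of two Riemann tensors.

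Next I would expand the two commutators $[\nabla^m,\nabla_k]R_{ijlm}$ and $[\nabla^m,\nabla_l]R_{ijmk}$ through the Ricci identity, which contributes one curvature factor for each of the four indices of the tensor it acts on. When that curvature factor acts on the slot being contracted by $\nabla^m$, the contraction produces a Ricci tensor, which on our Einstein manifold is simply $\mu g$; these terms assemble into the $2\mu|R|^2$ contribution. The remaining terms are genuine products of two Riemann tensors, and after contracting the whole expression with $R^{ijkl}$ they must be reorganized into the two cubic invariants $R^{ijkl}R_{imkn}R^{\brk n\brk m}_{j\brk l}$ and $R^{ijkl}R^{\dbrk mn}_{ij}R_{klmn}$ with coefficients $4$ and $2$ respectively, the overall sign from the integration by parts then yielding the stated formula. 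The main obstacle I expect is precisely this bookkeeping: keeping the signs straight from the convention $R_{X,Y}Z=\nabla^2_{X,Y}Z-\nabla^2_{Y,X}Z$ and from raising and lowering inside the commutators, and then invoking the full symmetry set of the Riemann tensor—antisymmetry within each index pair, the pair-exchange symmetry $R_{ijkl}=R_{klij}$, and especially the first Bianchi identity—to recognize that several superficially distinct contractions coincide after renaming dummy indices and thereby collapse into exactly the two canonical cubic terms with the correct coefficients.
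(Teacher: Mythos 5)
You take a genuinely different route from the paper, which contains no computation at all: its entire proof is the observation that the claimed identity is \cite[(2.15)]{Sak71} specialized to Einstein metrics (Sakai's formula holds on any compact Riemannian manifold and carries extra terms built from derivatives of the Ricci tensor, which drop out once $\ric=\mu g$ is parallel). Your plan --- integrate by parts, use the second Bianchi identity to write $\nabla^m\nabla_m R_{ijkl}=-\nabla^m\nabla_k R_{ijlm}-\nabla^m\nabla_l R_{ijmk}$, commute the derivatives, kill the principal terms via the contracted Bianchi identity $\nabla^m R_{ijlm}=\pm(\nabla_i\ric_{jl}-\nabla_j\ric_{il})=0$ on an Einstein manifold, and expand the Ricci-identity commutators --- is the standard self-contained derivation, essentially the way such formulas are obtained in \cite{Sak71} itself. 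What your approach buys is transparency: one sees exactly where the Einstein condition enters, namely once to make the divergence of $R$ vanish and once to turn the contracted commutator terms into the $2\mu|R|^2$ contribution. What the paper's citation buys is safety: the coefficients come from a published general formula, and the author only has to track the change of sign convention (which the paper explicitly notes it did).

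That said, be aware that your sketch stops exactly where the content of the lemma lies. The claim that the six surviving cubic contractions (three from each commutator) collapse, under the pair symmetry and the first Bianchi identity, into
\begin{align*}
4\,R^{ijkl}R_i{}^m{}_k{}^n R_{jnlm}+2\,R^{ijkl}R_{ij}{}^{mn}R_{klmn}
\end{align*}
with precisely these coefficients is asserted rather than derived, and these constants are exactly what gets used later: they propagate into Proposition \ref{einsteinsix} and into the numerical bound of Theorem \ref{einsteinsixstability}, so an error of a factor here would falsify the six-dimensional stability criterion. Since this is also the step where sign mistakes are easiest to make --- the paper itself warns that Sakai's formulas had to be translated to the convention $R_{X,Y}Z=\nabla^2_{X,Y}Z-\nabla^2_{Y,X}Z$ --- you should either carry out the index bookkeeping in full or cross-check your final coefficients against \cite[(2.15)]{Sak71}. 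With that verification supplied, your argument is complete and correct.
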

  \begin{proof}
This is \cite[(2.15)]{Sak71} in the special case of Einstein metrics.
 \end{proof}
\noindent
Note that we translated the formulas from \cite{Sak71} to our sign convention for the curvature tensor.
\begin{prop}\label{einsteinsix}Let $(M,g)$ be an Einstein six-manifold with constant $\mu$. Then
\begin{align*}\chi(M)&=\frac{1}{384\pi^3}\int_M\left\{-\frac{14}{5}\mu|W|^2-2|\nabla W|^2+\frac{144}{25}\mu^3+48\trace (\hat{W}^3)\right\}\dv.
\end{align*}
 Here, $\hat{W}^3=\hat{W}\circ \hat{W}\circ \hat{W}$, where $\hat{W}$ is the Weyl curvature operator\index{Weyl curvature operator} acting on $2$-forms\index{2@$2$-form}.
\end{prop}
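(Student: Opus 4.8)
The starting points are the Einstein Gauss--Bonnet formula \eqref{gaussbonnetdimensionsix} and Lemma \ref{sakailemma}; the plan is to rewrite every curvature invariant occurring in them in terms of the Weyl tensor by means of the decomposition \eqref{riccidecomposition2}, which in dimension six reads $R=W+\frac{\mu}{10}(g\owedge g)$. Since $\mu$ is constant, the scalar part $\frac{\mu}{10}(g\owedge g)$ is parallel, so $\nabla R=\nabla W$ and hence $\left\|\nabla R\right\|_{L^2}=\left\|\nabla W\right\|_{L^2}$. This already accounts for the $|\nabla W|^2$ term in the statement.

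Abbreviate the two cubic invariants appearing in \eqref{gaussbonnetdimensionsix} and Lemma \ref{sakailemma} by
\begin{align*}
A=R^{ijkl}R_{imkn}R_j{}^n{}_l{}^m,\qquad B=R^{ijkl}R_{ij}{}^{mn}R_{klmn}.
\end{align*}
One checks that the first cubic term of Lemma \ref{sakailemma} has exactly the contraction pattern of $A$, so that lemma reads $\left\|\nabla W\right\|_{L^2}^2=-\int_M\{4A+2B+2\mu|R|^2\}\dv$. I would next insert $R=W+\frac{\mu}{10}(g\owedge g)$ into $A$, $B$ and $|R|^2$ and expand by the degree in $W$. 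Orthogonality of $W$ to the scalar part gives $|R|^2=|W|^2+\frac{12}{5}\mu^2$. In the cubic invariants, the terms linear in $W$ vanish because $W$ is totally trace--free (every such term contracts two indices of $W$ against a metric), the cubic terms reproduce the corresponding Weyl invariants $A_W,B_W$, and the remaining terms are multiples of $\mu|W|^2$ (degree two) and of $\mu^3$ (degree zero). Writing $A=A_W+a_2\,\mu|W|^2+a_0\,\mu^3$ and $B=B_W+b_2\,\mu|W|^2+b_0\,\mu^3$, I thus obtain two identities, from \eqref{gaussbonnetdimensionsix} and from Lemma \ref{sakailemma}, each linear in the unknowns $A_W$ and $B_W$.

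The key point is to eliminate the unwanted invariant $A_W$, which does not appear in the statement. Substituting the value of $\int_M 8A_W\dv$ read off from Lemma \ref{sakailemma} into \eqref{gaussbonnetdimensionsix} leaves
\begin{align*}
384\pi^3\chi(M)=\int_M\Big\{-2|\nabla W|^2-6B_W+(c_1-2d_1)\mu|W|^2+(c_2-2d_2)\mu^3\Big\}\dv,
\end{align*}
where $c_1,c_2$ and $d_1,d_2$ are the lower--order coefficients coming from \eqref{gaussbonnetdimensionsix} and Lemma \ref{sakailemma}, respectively. A pleasant feature of this combination is that the coefficients $a_2$ and $a_0$ cancel, so the genuinely cumbersome $A$--with--scalar--part contractions never need to be evaluated: one finds $c_1-2d_1=-10-6b_2$ and $c_2-2d_2=-6b_0$. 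Hence only the two short contractions $b_2$ and $b_0$ are required. A one--line computation using $g^{ij}W_{ij}{}^{kl}=0$ gives $b_2=-\tfrac{6}{5}$, whence $c_1-2d_1=-\tfrac{14}{5}$; and $b_0\mu^3$ is just the $B$--invariant of a constant--curvature tensor of sectional curvature $\mu/5$, which equals $-\tfrac{24}{25}\mu^3$ (here $\hat R=\tfrac{\mu}{5}\,\mathrm{Id}$ on $\Lambda^2$), whence $c_2-2d_2=\tfrac{144}{25}$. Finally, $B_W$ is related to the trace of the cube of $\hat W$ by $B_W=-8\trace(\hat W^3)$ in the present sign convention for the curvature operator, turning $-6B_W$ into $+48\trace(\hat W^3)$ and giving exactly the claimed identity.

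The main obstacle is bookkeeping rather than conceptual: one must carry out the degree--by--degree expansion of $A$ and $B$ carefully, in particular confirming that the first cubic term of Lemma \ref{sakailemma} really is the same invariant $A$ as in \eqref{gaussbonnetdimensionsix} (so that the two identities are linear in the same pair $A_W,B_W$), and one must pin down the precise normalization and sign relating the operator $\hat W$ to the contraction $W^{ijkl}W_{ij}{}^{mn}W_{klmn}$. The cancellation of $a_2$ and $a_0$ is what makes the computation tractable, reducing it to evaluating $|g\owedge g|^2$, $b_2$ and $b_0$.
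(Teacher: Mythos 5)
Your proof is correct and follows essentially the same route as the paper: substitute Lemma \ref{sakailemma} into \eqref{gaussbonnetdimensionsix} to eliminate the cubic invariant $A$ (so its lower-order parts are never needed), use $\nabla R=\nabla W$ and $|R|^2=|W|^2+\tfrac{12}{5}\mu^2$, and identify the surviving cubic term with $48\trace(\hat{W}^3)$. The only difference is organizational: you expand $B=B_W+b_2\mu|W|^2+b_0\mu^3$ at the level of tensor contractions, whereas the paper first rewrites $-6B=48\trace(\hat{R}^3)$ and expands $\trace\bigl((\hat{W}+\tfrac{\mu}{5}\identity_{\Lambda^2M})^3\bigr)$ at the operator level; your values $b_2=-\tfrac{6}{5}$, $b_0=-\tfrac{24}{25}$ and the conversion $B_W=-8\trace(\hat{W}^3)$ reproduce exactly the paper's terms $\tfrac{36}{5}\mu|W|^2$, $\tfrac{144}{25}\mu^3$ and $48\trace(\hat{W}^3)$.
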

\begin{proof}By Lemma \ref{sakailemma}, (\ref{gaussbonnetdimensionsix}) can be rewritten as
 \begin{align*}384\pi^3\chi(M)=\int_M\{24\mu^3-10\mu|R|^2-2|\nabla R|^2-6 R^{ijkl}R^{\dbrk mn}_{ij}R_{klmn}\}\dv.
\end{align*}
 Moreover, $\nabla W=\nabla R$ because the difference $R-W=Sc$ is a parallel tensor. Thus,
 \begin{align*}384\pi^3\chi(M)&=\int_M\{24\mu^3-10\mu|R|^2-2|\nabla W|^2 -6 R^{ijkl}R^{\dbrk mn}_{ij}R_{klmn}\}\dv\\
                              &=\int_M\left\{24\mu^3-10\mu\left(\frac{12\mu^2}{5}+|W|^2\right)-2|\nabla W|^2 -6 R^{ijkl}R^{\dbrk mn}_{ij}R_{klmn}\right\}\dv\\
                              &=\int_M\{-10\mu|W|^2-2|\nabla W|^2 -6 R^{ijkl}R^{\dbrk mn}_{ij}R_{klmn}\}\dv.
 \end{align*}
 Now we analyse the last term on the right hand side. Recall that the Riemann curvature operator \index{Riemann curvature operator}$\hat{R}$\index{$\hat{R}$ Riemann curvature operator} and the Weyl curvature operator\index{Weyl curvature operator} $\hat{W}$ are defined by
\begin{align*}\langle\hat{R}(X\wedge Y),Z\wedge V\rangle&=R(Y,X,Z,V),\\
\langle\hat{W}(X\wedge Y),Z\wedge V\rangle&=W(Y,X,Z,V).
\end{align*}
Let $\left\{e_1,\ldots, e_n\right\}$ be a local orthonormal frame of $TM$. Then $\left\{e_i\wedge e_j\right\}$, $i<j$ is a local orthonormal frame of $\Lambda^2M$. A straightforward calculation shows
 \begin{align*}          -6 \sum_{i,j,k,l,m,n}R_{ijkl}R_{ijmn}R_{klmn}
           =&-48\sum_{\substack{i<j,k<l,m<n}}R_{ijkl}R_{ijmn}R_{klmn}=48 \trace \hat{R}^3,
 \end{align*}
where the coefficients of $R$ are taken with respect to the orthonormal frame.
 The decomposition (\ref{riccidecomposition2}) of the $4$-curvature tensor induces the decomposition $\hat{R}=\hat{W}+\frac{\mu}{5}\identity_{\Lambda^2M}$. This yields
\begin{align*}48 \trace \hat{R}^3&=48 \left\{\trace (\hat{W}^3) + 3\frac{\mu}{5}\trace (\hat{W}^2)+ 3\frac{\mu^2}{25}\trace \hat{W}+\frac{\mu^3}{125}\trace (\identity_{\Lambda^2M})\right\}\\
                           &=48\trace (\hat{W}^3)+\frac{36}{5}\mu|W|^2+\frac{144}{25}\mu^3.
\end{align*}
Inserting this in the above formula finishes the proof.
\end{proof}
 \begin{proof}[Proof of Theorem \ref{einsteinsixstability}]By the Sobolev inequality\index{Sobolev!inequality},
 \begin{align*}\left\|W\right\|_{L^3}^2\leq \frac{5}{6\mu}\left\|\nabla W\right\|^2_{L^2}+\left\|W\right\|^2_{L^2}.
 \end{align*}
Therefore we have, by Proposition \ref{einsteinsix}
 \begin{align*}               384\pi^3\chi(M)&<-\frac{12}{5}\mu\left\|W\right\|^2_{L^2}-2\left\|\nabla W\right\|_{L^2}^2+\frac{144}{25}\mu^3+48\int_M\trace (\hat{W}^3)\dv\\
                              &\leq-\frac{12}{5}\mu\left\|W\right\|_{L^3}^2+\frac{144}{25}\mu^3+48\int_M \trace (\hat{W}^3)\dv.
 \end{align*}
 Now if $\mu$ satisfies the estimate of the statement in the theorem, we obtain
 \begin{align*}\frac{12}{5}\mu\left\|W\right\|^2_{L^3}&<\frac{144}{25}\mu^3-384\pi^3\chi(M)+48\int_M \trace (\hat{W}^3)\dv\\
                                                      &\leq \frac{144}{25}\mu^3-\frac{1}{25}\left(144-\frac{12 \cdot7^2\cdot3^2}{5\cdot11^2}\right)\mu^3
                                                      =\frac{12\mu}{5}\frac{7^2\cdot3^2}{5^2\cdot11^2}\mu^2,
 \end{align*}
 which is equivalent to
 \begin{align*}\left\|W\right\|_{L^3}< \frac{7\cdot3}{5\cdot11}\mu.
 \end{align*}
 By Theorem \ref{integralweyl} and the Cauchy-Schwarz inequality, $(M,g)$ is strictly stable.
 \end{proof}
\section{K\"ahler manifolds}\label{Kahlermanifolds}
 Here, we prove stability criterions for K\"ahler-Einstein manifolds\index{K\"ahler-Einstein manifold} in terms of the Bochner curvature tensor\index{Bochner curvature tensor}, which is an analogue of the Weyl tensor\index{Weyl curvature tensor}.
 \begin{defn}Let $(M,g)$ be a Riemannian manifold of even dimension. An almost complex structure on $M$ is an endorphism $J:TM\to TM$ such that\index{$J$, almost complex structure}
 $J^2=-\identity_{TM}$. If $J$ is parallel and $g$ is hermitian, i.e.\ $g(JX,JY)=g(X,Y)$,  we call the triple $(M,g,J)$ a K\"ahler manifold\index{K\"ahler manifold}.
 If $(M,g)$ is Einstein, we call $(M,g,J)$ Einstein-K\"ahler.
 \end{defn}
The bundle of traceless symmetric $(0,2)$-tensors splits into hermitian and skew-hermitian\index{skew-hermitian} ones, i.e.\ we have $S^2_gM=H_1\oplus H_2$, 
where\index{$H_1$, space of hermitian tensors}\index{$H_2$, space of skew-hermitian tensors}
   \begin{align*}H_1&=\left\{h\in S^2_gM\mid h(X,Y)=h(JX,JY)\right\},\\
 H_2&=\left\{h\in S^2_gM\mid h(X,Y)=-h(JX,KY)\right\}.
 \end{align*}
 Stability of K\"ahler-Einstein manifolds was studied in \cite{Koi83,IN05,DWW07}. We sketch the ideas of \cite{Koi83} in the following.
 It turns out that the Einstein operator preserves the splitting $\Gamma(H_1)\oplus\Gamma(H_2)$. Therefore to show that a K\"ahler-Einstein manifold is stable it is sufficient to show that the restriction
 of $\Delta_E$ to the subspaces $\Gamma(H_1)$ and $\Gamma(H_2)$ is positive semidefinite, respectively. In fact, we can use the K\"ahler structure to conjugate the Einstein operator to other operators.
 If $h_1\in H_1$, we define a $2$-form\index{2@$2$-form} by
 \begin{align*}\phi(X,Y)=h_1\circ J(X,Y)=h_1(X,J(Y)).
 \end{align*}
 We have
 \begin{align}\label{H1formula}\Delta_H\phi=(\Delta_E h_1)\circ J+2\mu\phi,
 \end{align}
 where $\Delta_H$ is the Hodge Laplacian\index{Laplacian!Hodge}\index{$\Delta_H$, Hodge Laplacian} on $2$-forms and $\mu$ is the Einstein constant. Since $\Delta_H$ is nonnegative, $\Delta_E$ is nonnegative on $\Gamma(H_1)$, if $\mu\leq0$.
 For $h_2\in H_2$, we define a symmetric endomorphism $I:TM\to TM$ by
 \begin{align*}g\circ I=h_2\circ J,
 \end{align*}
and since $IJ+JI=0$, we may consider $I$ as a $T^{1,0}M$-valued $1$-form of type $(0,1)$.
 We have the formula\index{$\Delta_C$, complex Laplacian}
 \begin{align}\label{H2formula}g\circ(\Delta_CI)=(\Delta_Eh_2)\circ J,
 \end{align}
 where $\Delta_C$ is the complex Laplacian\index{Laplacian!complex}. Thus, the restriction of the Einstein operator to $\Gamma(H_2)$ is always nonnegative, since $\Delta_C$ is.
As a consequence, we have
 \begin{cor}[{{\cite[Corollary 1.2]{DWW07}}}]Any compact K\"ahler-Einstein manifold with nonpositive Einstein constant is stable.
 \end{cor}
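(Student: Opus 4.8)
The plan is to exploit the orthogonal splitting $\Gamma(S^2_gM)=\Gamma(H_1)\oplus\Gamma(H_2)$, which the Einstein operator preserves, and to show that $\Delta_E$ is nonnegative on each factor when $\mu\leq0$. Since every $TT$-tensor is traceless, it lies in $\Gamma(S^2_gM)$, so nonnegativity of $\Delta_E$ on all of $\Gamma(S^2_gM)$ immediately yields nonnegativity on $TT$, which by Definition \ref{stability} is exactly stability. Thus the corollary is a matter of assembling the two conjugation formulas already at hand.

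On the hermitian factor, I would pair \eqref{H1formula} in $L^2$ with $\phi=h_1\circ J$. The key observation is that $h_1\mapsto h_1\circ J$ is a fibrewise isometry from $H_1$ onto the relevant space of $2$-forms, so that $\|\phi\|_{L^2}=\|h_1\|_{L^2}$ and $\langle(\Delta_Eh_1)\circ J,\phi\rangle_{L^2}=\langle\Delta_Eh_1,h_1\rangle_{L^2}$. Using that the Hodge Laplacian $\Delta_H$ is nonnegative, \eqref{H1formula} then gives
\begin{align*}\langle\Delta_Eh_1,h_1\rangle_{L^2}=\langle\Delta_H\phi,\phi\rangle_{L^2}-2\mu\|h_1\|^2_{L^2}\geq-2\mu\|h_1\|^2_{L^2},\end{align*}
which is nonnegative precisely because $\mu\leq0$.

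On the skew-hermitian factor I would argue analogously with \eqref{H2formula}: the correspondence $h_2\mapsto I$ defined by $g\circ I=h_2\circ J$ is again an isometry, and since the complex Laplacian $\Delta_C$ is nonnegative, one obtains $\langle\Delta_Eh_2,h_2\rangle_{L^2}\geq0$ with no sign condition on $\mu$ at all. Combining the two factors, $\Delta_E\geq0$ on all of $\Gamma(S^2_gM)$, and hence on $TT$, so $(M,g)$ is stable.

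The main point requiring care — rather than a genuine obstacle — is verifying that the two algebraic correspondences $h_1\mapsto h_1\circ J$ and $h_2\mapsto I$ are pointwise isometries, since this is what allows the $L^2$-pairings to transfer between $\Delta_E$ and the auxiliary Laplacians without extra constants. Everything else is bookkeeping on top of the conjugation formulas \eqref{H1formula} and \eqref{H2formula}, whose derivation and the nonnegativity of $\Delta_H$ and $\Delta_C$ are taken from the preceding discussion.
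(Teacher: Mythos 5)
Your proposal is correct and follows exactly the paper's own argument: split $S^2_gM$ into hermitian and skew-hermitian parts preserved by $\Delta_E$, use \eqref{H1formula} and the nonnegativity of the Hodge Laplacian to handle $\Gamma(H_1)$ when $\mu\leq0$, and use \eqref{H2formula} and the nonnegativity of the complex Laplacian to handle $\Gamma(H_2)$ unconditionally. Your added verification that $h_1\mapsto h_1\circ J$ and $h_2\mapsto I$ are pointwise isometries is exactly the bookkeeping the paper leaves implicit, so there is nothing to change.
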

Using (\ref{H1formula}) and (\ref{H2formula}), $\dimn(\kernel\Delta_{E}|_{TT})$ can be expressed in terms of certain cohomology classes\index{cohomology class} (see \cite[Corollary 9.4]{Koi83} or \cite[Proposition 12.98]{Bes08}).
Moreover, integrability of infinitesimal Einstein deformations can be related to integrability of infinitesimal complex deformations\index{infinitesimal complex deformation} (\cite[Proposition 10.1]{Koi83} and \cite[Theorem 3]{IN05}).

 We discuss conditions under which a K\"ahler-Einstein manifold is
 strictly stable in the nonpositive case and stable in the positive case. This can be described in terms of the
 Bochner curvature tensor which has similar properties as the Weyl tensor.
  \begin{defn}[Bochner curvature tensor]Let $(M,g,J)$ be a K\"ahler manifold and let $\omega(X,Y)=g(J(X),Y)$ be the K\"ahler form. The Bochner curvature tensor\index{Bochner curvature tensor} is defined by\index{$B$, Bochner curvature tensor}
 \begin{align*}B=&R+\frac{\scal}{2(n+2)(n+4)}\left\{g\owedge g+\omega\owedge \omega-4\omega\otimes \omega\right\}\\
                 &-\frac{1}{n+4}\left\{\ric\owedge g+(\ric\circ J)\owedge \omega-2(\ric\circ J)\otimes \omega-2 \omega\otimes(\ric\circ J)\right\}
 \end{align*}
 (see e.g.\ \cite[p.~229]{IK04}). 
 \end{defn}
 \noindent
 The Bochner curvature tensor posesses the same symmetries as the Riemann tensor and in addition, any of its traces vanishes.
 If $(M,g)$ is K\"ahler-Einstein\index{K\"ahler-Einstein manifold}, the Bochner tensor is
 \begin{align*}B=R-\frac{\mu}{2(n+2)}\left\{g\owedge g+\omega\owedge\omega-4\omega\otimes\omega\right\},
 \end{align*}
 where $\mu$ is the Einstein constant (see e.g.\ \cite[p.~229]{IK04} and mind the different sign convention for the curvature tensor). 
 The Bochner tensor acts naturally on symmetric $(0,2)$-tensors by\index{$\mathring{B}$, Bochner curvature action on $S^2M$}
 \begin{align*}\mathring{B}h(X,Y)=\sum_{i,j=n}^n B(e_i,X,Y,e_j) h(e_i,e_j),
 \end{align*}
 where $\left\{e_1,\ldots,e_n\right\}$ is an orthonormal basis.
 Let\index{$b^+(p)$}
  \begin{align*}b^+(p)=\left\{\frac{\langle\mathring{B}\eta,\eta\rangle}{|\eta|^2}\Biggmid\eta\in (H_1)_p\right\}.
  \end{align*}
 For K\"ahler-Einstein manifolds with negative Einstein constant, it was proven by M.\ Itoh and T.\ Nakagawa that they are strictly stable if the Bochner tensor \index{Bochner curvature tensor}
 is small.
 \begin{thm}[{{\cite[Theorem 4.1]{IN05}}}]\label{itohnagakawa}Let $(M,g,J)$ be a compact K\"ahler-Einstein manifold\index{K\"ahler-Einstein manifold} with negative Einstein constant $\mu$. If the Bochner curvature tensor
 satisfies
 \begin{align}\label{itohnagakawa_est}\left\|b^+\right\|_{L^{\infty}}<-\mu\frac{n}{n+2},
 \end{align}
 then $g$ is strictly stable.
 \end{thm}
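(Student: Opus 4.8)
The plan is to use the orthogonal, $\Delta_E$-invariant splitting $S^2_gM=H_1\oplus H_2$ into hermitian and skew-hermitian traceless tensors and to prove positivity of $\Delta_E$ on each summand separately. On $\Gamma(H_1)$ the hypothesis is in fact not needed: applying (\ref{H1formula}) to $\phi=h_1\circ J$ and pairing with $\phi$, and using that $J$ is an isometry (so $|\phi|=|h_1|$ and $((\Delta_E h_1)\circ J,\phi)_{L^2}=(\Delta_E h_1,h_1)_{L^2}$), I would obtain
\begin{align*}
(\Delta_E h_1,h_1)_{L^2}=(\Delta_H\phi,\phi)_{L^2}-2\mu\left\|h_1\right\|^2_{L^2}\geq-2\mu\left\|h_1\right\|^2_{L^2},
\end{align*}
which is strictly positive for $h_1\neq0$ since $\Delta_H\geq0$ and $\mu<0$. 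Hence the entire force of the theorem must be carried by the skew-hermitian summand $\Gamma(H_2)$, where (\ref{H2formula}) only yields nonnegativity a priori.

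On $\Gamma(H_2)$ I would run the Bochner technique with the Bochner tensor in the role that the Weyl tensor played in Section \ref{stabilityweyl}. Using the Kähler-Einstein decomposition, write $\mathring R=\mathring B+\frac{\mu}{2(n+2)}\mathring{\mathcal M}$ with $\mathcal M=g\owedge g+\omega\owedge\omega-4\omega\otimes\omega$. The first step is a purely algebraic computation of the model operator $\mathring{\mathcal M}$ on $(H_2)_p$: since $\mathcal M$ is, up to scale, the curvature tensor of a complex space form, and $H_2$ is an irreducible isotropy module, Schur's lemma forces $\mathring{\mathcal M}$ to act there as a universal constant times the identity, and pinning this constant down is exactly what produces the factor $\frac{n}{n+2}$. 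Feeding this into $(\Delta_E h_2,h_2)_{L^2}=\left\|\nabla h_2\right\|^2_{L^2}-2(\mathring R h_2,h_2)_{L^2}$, discarding the nonnegative term $\left\|\nabla h_2\right\|^2_{L^2}$, and estimating $\langle\mathring B h_2,h_2\rangle\leq b^+|h_2|^2$ pointwise would give a bound of the form
\begin{align*}
(\Delta_E h_2,h_2)_{L^2}\geq2\left(-\mu\frac{n}{n+2}-\left\|b^+\right\|_{L^{\infty}}\right)\left\|h_2\right\|^2_{L^2},
\end{align*}
which is positive precisely under (\ref{itohnagakawa_est}).

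The hard part will be the interface between the two summands. The quantity $b^+$ is defined as the largest eigenvalue of $\mathring B$ restricted to $H_1$, whereas the estimate above needs to control $\mathring B$ on $H_2$, so I would have to verify that the top eigenvalue of $\mathring B$ on $(H_2)_p$ is bounded by $b^+(p)$. This ought to follow from the extra $J$-symmetries of the Bochner tensor, which couple its actions on $H_1$ and $H_2$, but it is exactly the kind of bookkeeping where a sign or a constant is easily misplaced, and I expect this is where the error corrected in \cite{IN05} resides. A cleaner and conceptually equivalent route for the $H_2$-part is to identify, via (\ref{H2formula}), the kernel $\kernel(\Delta_E|_{\Gamma(H_2)})$ with the space of harmonic $T^{1,0}M$-valued $(0,1)$-forms and to prove its vanishing by a Weitzenböck argument; positivity of the Weitzenböck curvature term reduces to the same pointwise control of $\mathring B$ combined with the negativity of $\mu$, and simultaneously explains why strict stability can fail only through infinitesimal complex deformations.
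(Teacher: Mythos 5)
Your treatment of $\Gamma(H_1)$ is correct and is exactly the paper's: by (\ref{H1formula}), $(\Delta_E h_1,h_1)_{L^2}=(\Delta_H\phi,\phi)_{L^2}-2\mu\|h_1\|_{L^2}^2>0$ for $\mu<0$ and $h_1\neq0$. The gap is in the $\Gamma(H_2)$ step, and it is fatal. On $H_2$, the decomposition (\ref{bochnerdecomposition}) gives pointwise
\begin{align*}
\langle\mathring{R}h_2,h_2\rangle=\langle\mathring{B}h_2,h_2\rangle-\frac{4\mu}{n+2}|h_2|^2,
\end{align*}
i.e.\ your model operator $\mathring{\mathcal{M}}$ acts as $-8\cdot\identity$ on $H_2$ (and as $+4\cdot\identity$ on $H_1$); these Schur constants are dimension-independent, so no factor $\frac{n}{n+2}$ can be produced at this stage. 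Consequently, discarding $\|\nabla h_2\|_{L^2}^2$ from $(\Delta_E h_2,h_2)_{L^2}=\|\nabla h_2\|_{L^2}^2-2(\mathring{R}h_2,h_2)_{L^2}$ yields
\begin{align*}
(\Delta_E h_2,h_2)_{L^2}\geq-2(\mathring{B}h_2,h_2)_{L^2}+\frac{8\mu}{n+2}\|h_2\|_{L^2}^2,
\end{align*}
and since $\mu<0$ the zeroth-order term is \emph{negative}: under your hypothesis (\ref{itohnagakawa_est}) the right-hand side is bounded below only by $2\mu\frac{n+4}{n+2}\|h_2\|_{L^2}^2<0$, which proves nothing. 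No smallness assumption on $b^+$ can repair this, because the crude formula $\Delta_E=\nabla^*\nabla-2\mathring{R}$ contains no positive zeroth-order term when $\mu<0$. The positive term must be generated by Koiso's refined Bochner formula (\ref{bochner4}), $(\Delta_Eh,h)_{L^2}=\|D_2h\|_{L^2}^2-\mu\|h\|_{L^2}^2-(\mathring{R}h,h)_{L^2}+\|\delta h\|_{L^2}^2$, which supplies $-\mu\|h_2\|_{L^2}^2>0$ and, since $\|\delta h_2\|_{L^2}^2$ enters with a plus sign, also disposes of the fact that the components $h_1,h_2$ of a $TT$-tensor need not themselves be divergence-free. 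This is the route the paper takes.

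But with (\ref{bochner4}) the zeroth-order terms combine to $-\mu\|h_2\|_{L^2}^2+\frac{4\mu}{n+2}\|h_2\|_{L^2}^2=-\mu\frac{n-2}{n+2}\|h_2\|_{L^2}^2$, so one obtains strict stability only under $\|b^+\|_{L^{\infty}}<-\mu\frac{n-2}{n+2}$. This is precisely the point of this passage of the paper: the constant $\frac{n}{n+2}$ in \cite[Theorem 4.1]{IN05} stems from a calculational error, the paper does \emph{not} prove the statement as quoted, and its actual conclusion is the Remark following the proof, with the strictly stronger hypothesis (\ref{itohnagakawa_est2}). That your sketch lands exactly on $\frac{n}{n+2}$ is therefore a sign that the constant was asserted rather than derived. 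Your guess about where the error lives is also off target: the paper estimates $-(h_2,\mathring{B}h_2)_{L^2}\geq-\|b^+\|_{L^{\infty}}\|h_2\|_{L^2}^2$ for $h_2\in\Gamma(H_2)$ exactly as you propose, so the $H_1$/$H_2$ interface is not where the correction occurs; it is in the zeroth-order bookkeeping above. Finally, your fallback through harmonic $T^{1,0}M$-valued $(0,1)$-forms only re-expresses, via (\ref{H2formula}), that $\Delta_E|_{\Gamma(H_2)}\geq0$ with kernel the infinitesimal complex deformations; upgrading this to strict positivity requires the very same quantitative Weitzenb\"ock estimate you have not supplied, so the gap is not closed there either.
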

 However, an error occured in the calculations and the result is slightly different.
 Therefore, we redo the proof. By straightforward calculation, 
 \begin{align}\label{bochnerdecomposition}\langle \mathring{R}h,h\rangle=\langle \mathring{B}h,h\rangle-\frac{\mu}{n+2}\{|h|^2-3\sum_{i,j}h(e_i,e_j)h(J(e_i),J(e_j))\}.
 \end{align}
 In particular,
 \begin{align*}\langle \mathring{R}h_1,h_1\rangle=\langle \mathring{B}h_1,h_1\rangle+2\frac{\mu}{n+2}|h_1|^2
 \end{align*}
 for $h_1\in H_1$ and
 \begin{align*}\langle \mathring{R}h_2,h_2\rangle=\langle \mathring{B}h_2,h_2\rangle-4\frac{\mu}{n+2}|h_2|^2
 \end{align*}
 for $h_2\in H_2$.  By (\ref{H1formula}), $\Delta_E$ is positive definite on $\Gamma(H_1)$ so it remains to consider $\Gamma(H_2)$.
 By (\ref{bochner4}),
 \begin{align*}(\Delta_E h_2,h_2)_{L^2}&=\left\|D_2 h_2\right\|^2_{L^2}-\mu\left\| h_2\right\|_{L^2}^2
 -(h_2,\mathring{R}h_2)_{L^2}+\left\|\delta h_2\right\|^2_{L^2}.\\
&\geq-\mu\left\| h_2\right\|_{L^2}^2 -(h_2,\mathring{B}h_2)_{L^2}+\frac{4\mu}{n+2}\left\|h_2\right\|^2_{L^2}\\
&\geq-\mu\frac{n-2}{n+2}\left\| h_2\right\|_{L^2}^2 -\left\|b^+\right\|_{L^{\infty}}\left\| h_2\right\|_{L^2}^2.
\end{align*}
 \begin{rem}Theorem \ref{itohnagakawa} is true if we replace (\ref{itohnagakawa_est}) by
 \begin{align}\label{itohnagakawa_est2}\left\|b^+\right\|_{L^{\infty}}<-\mu\frac{n-2}{n+2}.
 \end{align}
 \end{rem}
\noindent
Now, let us turn to positive K\"ahler-Einstein manifolds. We will use Bochner formula\index{Bochner formula} (\ref{bochner3}). Unfortunately, we cannot make use of the vector bundle splitting $S_g^2M=H_1\oplus H_2$.
In order to apply (\ref{bochner3}), we need the condition $\delta h=0$, which is not preserved by the splitting into hermitian\index{hermitian} and skew-hermitian\index{skew-hermitian} tensors.
 Let\index{$b(p)$}
 \begin{align}\label{functionb}b(p)&=\sup\left\{\frac{\langle \mathring{B}\eta,\eta\rangle}{|\eta|^2}\Biggmid\eta\in (S^2M)_p\right\}.
 \end{align}
 Since the trace of the Bochner tensor vanishes, $\mathring{B}\colon(S^2M)_p\to (S^2M)_p$ has also vanishing trace (this follows from the same arguments as used in the proof of Lemma \ref{traceW}).
 Thus, $b$ is nonnegative.
 \begin{thm}\label{bochnerthm1}Let $(M,g,J)$ be K\"ahler-Einstein\index{K\"ahler-Einstein manifold} with positive Einstein constant $\mu$. If
 \begin{align*}\left\|b\right\|_{L^{\infty}}\leq \frac{\mu(n-2)}{2(n+2)},
 \end{align*}
 then $(M,g)$ is stable.
 \end{thm}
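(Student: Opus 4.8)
The plan is to run the Bochner argument based on (\ref{bochner3}) directly on all $TT$-tensors, just as in the proof of Theorem \ref{supremeweyl}, but with the curvature term now controlled by the Bochner tensor instead of the Weyl tensor. For a $TT$-tensor $h$ we have $\delta h = 0$, so (\ref{bochner3}) reduces to
\begin{align*}(\Delta_E h, h)_{L^2} = \left\|D_1 h\right\|_{L^2}^2 + 2\mu\left\|h\right\|_{L^2}^2 - 4(\mathring{R}h, h)_{L^2}.\end{align*}
Since $\left\|D_1 h\right\|_{L^2}^2 \geq 0$, it suffices to establish the pointwise bound $\langle \mathring{R}h, h\rangle \leq \left(\left\|b\right\|_{L^\infty} + \tfrac{2\mu}{n+2}\right)|h|^2$. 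Indeed, integrating this and inserting it into the displayed identity yields
\begin{align*}(\Delta_E h, h)_{L^2} \geq \left[2\mu - \tfrac{8\mu}{n+2} - 4\left\|b\right\|_{L^\infty}\right]\left\|h\right\|_{L^2}^2 = \left[\tfrac{2\mu(n-2)}{n+2} - 4\left\|b\right\|_{L^\infty}\right]\left\|h\right\|_{L^2}^2,\end{align*}
which is nonnegative precisely under the hypothesis $\left\|b\right\|_{L^\infty} \leq \tfrac{\mu(n-2)}{2(n+2)}$.

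The heart of the matter is therefore the pointwise estimate, which I would derive from the decomposition (\ref{bochnerdecomposition}). Writing $Q(h) = \sum_{i,j} h(e_i,e_j)\, h(J(e_i),J(e_j))$, that formula reads $\langle \mathring{R}h, h\rangle = \langle \mathring{B}h, h\rangle - \tfrac{\mu}{n+2}|h|^2 + \tfrac{3\mu}{n+2} Q(h)$. To evaluate $Q(h)$, I would split $h = h_1 + h_2$ pointwise into its hermitian and skew-hermitian parts $h_k \in H_k$. Using $h_1(J(\cdot),J(\cdot)) = h_1$ and $h_2(J(\cdot),J(\cdot)) = -h_2$, the cross terms cancel and one finds $Q(h) = |h_1|^2 - |h_2|^2$, while $|h|^2 = |h_1|^2 + |h_2|^2$ by orthogonality of the splitting. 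Substituting gives the clean identity
\begin{align*}\langle \mathring{R}h, h\rangle = \langle \mathring{B}h, h\rangle + \tfrac{2\mu}{n+2}|h_1|^2 - \tfrac{4\mu}{n+2}|h_2|^2.\end{align*}
Now bounding $\langle \mathring{B}h, h\rangle \leq b(p)|h|^2 \leq \left\|b\right\|_{L^\infty}|h|^2$ via the definition (\ref{functionb}), retaining $|h_1|^2 \leq |h|^2$, and discarding the nonpositive last term (legitimate since $\mu > 0$) produces exactly the required upper bound.

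The conceptual obstacle to be navigated is the one flagged in the text just before the statement: the condition $\delta h = 0$ is not preserved by the splitting $S^2_g M = H_1 \oplus H_2$, so one cannot diagonalize the Einstein operator over the two factors as in the nonpositive case. The resolution is to recognize that the hermitian/skew-hermitian decomposition is only needed to compute the algebraic quadratic form $Q$ at each point, whereas the genuinely analytic curvature contribution $\langle \mathring{B}h, h\rangle$ is controlled uniformly by the function $b$, which is defined as a supremum over \emph{all} of $(S^2M)_p$ rather than over $H_1$ alone. The remaining work is the purely algebraic verification that $2\mu - \tfrac{8\mu}{n+2} = \tfrac{2\mu(n-2)}{n+2}$, so that the threshold matches the hypothesis exactly; this is routine, and no strictness is claimed, consistent with the possibility of equality throughout the chain of estimates.
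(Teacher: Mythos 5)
Your proof is correct and takes essentially the same route as the paper: the paper likewise derives the pointwise bound $\langle\mathring{R}h,h\rangle\leq\langle\mathring{B}h,h\rangle+\tfrac{2\mu}{n+2}|h|^2$ from (\ref{bochnerdecomposition}) (citing Cauchy--Schwarz where you instead use the explicit pointwise splitting $h=h_1+h_2$ with $Q(h)=|h_1|^2-|h_2|^2$, which is a slightly more explicit justification of the same inequality), and then inserts it into the Bochner formula (\ref{bochner3}) with $\delta h=0$, discarding $\left\|D_1h\right\|_{L^2}^2$, to arrive at the same threshold $\tfrac{2\mu(n-2)}{n+2}-4\left\|b\right\|_{L^\infty}\geq0$.
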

 \begin{proof}Let $h\in TT$.
By (\ref{bochnerdecomposition}) and the Cauchy-Schwarz inequality\index{Cauchy-Schwarz inequality},
\begin{align*}\langle\mathring{R}h,h\rangle\leq\langle\mathring{B}h,h\rangle+2\frac{\mu}{n+2}|h|^2.
\end{align*}
 Using (\ref{bochner3}), we therefore obtain
  \begin{align*}(\Delta_Eh,h)_{L^2}=&\left\|D_1 h\right\|^2_{L^2}+2\mu\left\| h\right\|_{L^2}^2
 -4(h,\mathring{R}h)_{L^2}\\
 \geq &2\mu\left\| h\right\|_{L^2}^2-4(h,\mathring{B}h)_{L^2}-8\frac{\mu}{n+2}\left\|h\right\|^2_{L^2}\\
 \geq &2\mu\frac{n-2}{n+2}\left\| h\right\|_{L^2}^2-4\left\|b\right\|_{L^{\infty}}\left\|h\right\|^2_{L^2}. 
 \end{align*}
 Under the assumptions of the theorem, $\Delta_{E}|_{TT}$ is nonnegative.
\end{proof}
\noindent
 We also prove a stability criterion involving the $L^{n/2}$-norm of $b$:
 \begin{thm}\label{bochnerthm2}Let $(M,g,J)$ be a positive K\"ahler-Einstein manifold\index{K\"ahler-Einstein manifold} with constant $\mu$. If the function $b$ satisfies
 \begin{align*}\left\|b\right\|_{L^{n/2}}\leq \mu\cdot\volume(M,g)^{2/n}\cdot\frac{(n-2)}{2(n+2)}\left(\frac{4(n-1)}{n(n+2)}+1\right)^{-1},
 \end{align*}
 then $(M,g)$ is stable.
 \end{thm}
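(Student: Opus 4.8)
The plan is to imitate the proof of Theorem \ref{integralweyl}, replacing the Weyl action $\mathring{W}$ by the Bochner action $\mathring{B}$, and replacing the exact identity $(\mathring{R}h,h)=(\mathring{W}h,h)-\frac{\mu}{n-1}|h|^2$ (valid on $TT$-tensors in the Weyl case) by the pointwise inequality that follows from (\ref{bochnerdecomposition}) and the Cauchy--Schwarz inequality,
\begin{align*}\langle\mathring{R}h,h\rangle\leq\langle\mathring{B}h,h\rangle+\frac{2\mu}{n+2}|h|^2,\end{align*}
which holds for every symmetric $(0,2)$-tensor $h$ and was already established in the proof of Theorem \ref{bochnerthm1}. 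First I would record that both sides of the asserted inequality are scale-invariant: under a constant rescaling the $(0,4)$-Bochner tensor scales like the metric, so $b$ transforms exactly as $w$ does in Lemma \ref{scalingofb}, and both $\left\|b\right\|_{L^{n/2}}$ and $\mu\cdot\volume(M,g)^{2/n}$ are unchanged. Hence we may normalize $\volume(M,g)=1$.

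Next I would estimate the Bochner term by Hölder's inequality and the Sobolev inequality. Since $b\geq0$ is the largest eigenvalue of $\mathring{B}$ on $(S^2M)_p$, we have $\langle\mathring{B}h,h\rangle\leq b\,|h|^2$, whence
\begin{align*}(\mathring{B}h,h)_{L^2}\leq\left\|b\right\|_{L^{n/2}}\left\|h\right\|_{L^{2n/(n-2)}}^2\leq\left\|b\right\|_{L^{n/2}}\left(\frac{4(n-1)}{\mu n(n-2)}\left\|\nabla h\right\|_{L^2}^2+\left\|h\right\|_{L^2}^2\right),\end{align*}
where the last step uses the Sobolev inequality (\ref{sobolev}) for the Yamabe metric $g$ together with Kato's inequality and $\scal=n\mu$, exactly as in the Weyl proof.

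Now let $h\in TT$. Inserting the two displayed estimates into $(\Delta_Eh,h)_{L^2}=\left\|\nabla h\right\|_{L^2}^2-2(\mathring{R}h,h)_{L^2}$ yields a lower bound of the form $(1-c\left\|b\right\|_{L^{n/2}})\left\|\nabla h\right\|_{L^2}^2-(2\left\|b\right\|_{L^{n/2}}+\frac{4\mu}{n+2})\left\|h\right\|_{L^2}^2$, with $c=\frac{8(n-1)}{\mu n(n-2)}$. In contrast to Theorem \ref{bochnerthm1}, the coefficient of $\left\|h\right\|_{L^2}^2$ is now negative because $\mu>0$, so a single Bochner formula does not suffice. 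As in the proof of Theorem \ref{integralweyl} I would therefore bound $\left\|\nabla h\right\|_{L^2}^2$ from below by feeding the same two estimates into (\ref{bochner3}) with $\delta h=0$, obtaining $\left\|\nabla h\right\|_{L^2}^2\geq2\left(\frac{\mu n}{n+2}-\left\|b\right\|_{L^{n/2}}\right)\left(1+c\left\|b\right\|_{L^{n/2}}\right)^{-1}\left\|h\right\|_{L^2}^2$. Substituting this (legitimate since $1-c\left\|b\right\|_{L^{n/2}}\geq0$ under the hypothesis) into the previous inequality reduces nonnegativity of $(\Delta_Eh,h)_{L^2}$ to a single algebraic inequality in $\left\|b\right\|_{L^{n/2}}$, which simplifies to the bound in the statement; this is the exact analogue of the computation producing (\ref{integralestimate}).

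The main obstacle is precisely this sign reversal in the $\left\|h\right\|_{L^2}^2$ coefficient: unlike in Theorem \ref{bochnerthm1}, one cannot conclude from a single Bochner formula but must use (\ref{bochner3}) twice, once to lower-bound $(\Delta_Eh,h)_{L^2}$ and once to lower-bound $\left\|\nabla h\right\|_{L^2}^2$, and then balance the two. The remaining work is the bookkeeping of constants: after clearing the factor $1+c\left\|b\right\|_{L^{n/2}}$ the quadratic terms in $\left\|b\right\|_{L^{n/2}}$ cancel and one is left with a linear inequality whose threshold is the Bochner analogue of the Weyl bound, with the factor $\frac{n-2}{n+2}$ (coming from $1-2\cdot\frac{2}{n+2}$) playing the role of $\frac{n+1}{n-1}$ in the Weyl computation.
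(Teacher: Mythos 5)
Your proposal reproduces the paper's own proof essentially step by step: the same volume normalization, the same H\"older--Sobolev estimate on $(\mathring{B}h,h)_{L^2}$, the same use of \eqref{bochnerdecomposition} with Cauchy--Schwarz, and the same double application of \eqref{bochner3} (once to bound $(\Delta_E h,h)_{L^2}$ from below, once to bound $\left\|\nabla h\right\|^2_{L^2}$ from below), followed by the same balancing algebra. One shared caveat: carrying out that final algebra actually yields the threshold $\mu\frac{n-2}{2(n+2)}\bigl(\frac{4(n-1)}{n(n-2)}+1\bigr)^{-1}$, i.e.\ with $n(n-2)$ rather than the $n(n+2)$ printed in the statement --- a discrepancy that is present in the paper itself, so your claim that the algebra ``simplifies to the bound in the statement'' inherits exactly the same imprecision as the published proof.
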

 \begin{proof}The proof is very similar to that of Theorem \ref{integralweyl}. We may assume that $\volume(M,g)=1$.
 Let $h\in TT$. By assumtion, $(M,g)$ is a Yamabe metric\index{Yamabe!metric}. Thus, we can use the Sobolev inequality\index{Sobolev!inequality}  and we get
 \begin{align*}(\mathring{B}h,h)_{L^2}\leq\int_M b|h|^2\dv        &\leq\left\|b\right\|_{L^{n/2}}\left\|h\right\|_{L^{2n/n-2}}^2
                                          \leq\left\|b\right\|_{L^{n/2}}\left(\frac{4(n-1)}{\mu n(n-2)}\left\|\nabla h\right\|^2_{L^2}+\left\|h\right\|^2_{L^2}\right).
 \end{align*}
 By the above,
 \begin{align*}(\Delta_Eh,h)_{L^2}&=\left\|\nabla h\right\|^2_{L^2}-2(\mathring{R}h,h)_{L^2}\\
                               &\geq\left\|\nabla h\right\|^2_{L^2}-2(\mathring{B}h,h)_{L^2}-\frac{4\mu}{n+2}\left\|h\right\|^2_{L^2}\\
                               &\geq\left\|\nabla h\right\|^2_{L^2}-2\left\|b\right\|_{L^{n/2}}\left(\frac{4(n-1)}{\mu n(n-2)}\left\|\nabla h\right\|^2_{L^2}+\left\|h\right\|^2_{L^2}\right)-\frac{4\mu}{n+2}\left\|h\right\|^2_{L^2}\\
                               &=\left(1-\frac{8(n-1)}{\mu n(n-2)}\left\|b\right\|_{L^{n/2}}\right)\left\|\nabla h\right\|^2_{L^2}-2\left\|b\right\|_{L^{n/2}}\left\|h\right\|^2_{L^2}-\frac{4\mu}{n+2}\left\|h\right\|^2_{L^2}.
 \end{align*}
 The first term on the right hand side is nonnegative by the assumption on $b$.
 To estimate $\left\|\nabla h\right\|_{L^2}^2$, we rewrite (\ref{bochner3}) to get
 \begin{align*}\left\|\nabla h\right\|_{L^2}^2&=\left\|D_1 h\right\|^2_{L^2}+2\mu\left\| h\right\|_{L^2}^2
 -2(h,\mathring{R}h)_{L^2}\\
 &\geq2\mu\frac{n}{n+2}\left\| h\right\|_{L^2}^2-2(h,\mathring{B}h)_{L^2}\\
 &\geq2\mu\frac{n}{n+2}\left\| h\right\|_{L^2}^2-2\left\|b\right\|_{L^{n/2}}\left(\frac{4(n-1)}{\mu n(n-2)}\left\|\nabla h\right\|^2_{L^2}+\left\|h\right\|^2_{L^2}\right).
 \end{align*}
 Thus,
 \begin{align*}\left\|\nabla h\right\|_{L^2}^2\geq2\left(\mu\frac{n}{n+2}-\left\|b\right\|_{L^{n/2}}\right)\left(1+\frac{8(n-1)}{\mu n(n-2)}\left\|b\right\|_{L^{n/2}}\right)^{-1}\left\|h\right\|_{L^2}^2.
 \end{align*}
 By combining these arguments,
 \begin{align*}(\Delta_Eh,h)_{L^2}\geq \bigg\{2&\left(\mu\frac{n}{n+2}-\left\|b\right\|_{L^{n/2}}\right)\left(1-\frac{8(n-1)}{\mu n(n-2)}\left\|b\right\|_{L^{n/2}}\right)\\
                                           &\left(1+\frac{8(n-1)}{\mu n(n-2)}\left\|b\right\|_{L^{n/2}}\right)^{-1}-2\left\|b\right\|_{L^{n/2}}-\frac{4\mu}{n+2}\bigg\}\left\|h\right\|^2_{L^2},
 \end{align*}
 and the right-hand side is nonnegative if the assumption of the theorem holds.
 \end{proof}
\begin{rem}
By the Cauchy-Schwarz inequality\index{Cauchy-Schwarz inequality}, we clearly have
\begin{align}\label{bpestimate}b(p)\leq|B|_p.
\end{align}
\end{rem}
 \begin{rem}As for the Weyl tensor, there also exist isolation results for the $L^{n/2}$-norm of the Bochner tensor, see \cite[Theorem A]{IK04}. 
 The methods are similar to those used in \cite{ItSa02} and for the constant $C_n$ appearing in formula $(24)$ of \cite{IK04}, the value $1/6$ seems to be not too far away from the optimum.
 A criterion combining Theorem \ref{bochnerthm2} and \eqref{bpestimate} is not ruled out by these results, if $n\geq5$.
 If $n=4$, $B=W^-$ (see \cite[p.~232]{IK04}). Then Theorem \ref{GLtheorem} applies and this criterion is ruled out.
 \end{rem}


\begin{thebibliography}{DWW07}

\providecommand{\url}[1]{\texttt{#1}}
\expandafter\ifx\csname urlstyle\endcsname\relax
  \providecommand{\doi}[1]{doi: #1}\else
  \providecommand{\doi}{doi: \begingroup \urlstyle{rm}\Url}\fi

\bibitem[AM11]{AMo11}
\textsc{Andersson}, Lars ; \textsc{Moncrief}, Vincent:
\newblock {Einstein spaces as attractors for the Einstein flow.}
\newblock {In: }\emph{J. Differ. Geom.} \textbf{89} (2011), no. 1, 1--47

\bibitem[B\"oh05]{Boe05}
\textsc{B\"ohm}, Christoph:
\newblock {Unstable Einstein metrics.}
\newblock {In: }\emph{Math. Z.} \textbf{250} (2005), no. 2, 279--286

\bibitem[Ber65]{Ber65}
\textsc{Berger}, Marcel:
\newblock {Sur les vari\'et\'es d'Einstein compactes.}
\newblock {In: }\emph{Comptes {R}endus de la {III}e {R}\'eunion du {G}roupement
  des {M}ath\'ematiciens d'{E}xpression {L}atine}.
\newblock 1965, 35--55

\bibitem[Bes08]{Bes08}
\textsc{Besse}, Arthur~L.:
\newblock \emph{{Einstein manifolds. Reprint of the 1987 edition.}}
\newblock {Berlin: Springer}, 2008

\bibitem[BF82]{BF82}
\textsc{Breitenlohner}, Peter ; \textsc{Freedman}, Daniel~Z.:
\newblock Positive energy in anti-de Sitter backgrounds and gauged extended
  supergravity.
\newblock {In: }\emph{Physics Letters B} \textbf{115} (1982), no. 3, 197--201

\bibitem[BS09]{BS09}
\textsc{Brendle}, Simon ; \textsc{Schoen}, Richard:
\newblock {Manifolds with $1/4$-pinched curvature are space forms.}
\newblock {In: }\emph{J. Am. Math. Soc.} \textbf{22} (2009), no. 1, 287--307

\bibitem[CH13]{CH13}
\textsc{Cao}, Huai-Dong ; \textsc{He}, Chenxu:
\newblock Linear {S}tability of {P}erelmans $\nu$-entropy on {S}ymmetric spaces
  of compact type.
\newblock   (2013). --
\newblock arXiv:1304.2697v1

\bibitem[CHI04]{CHI04}
\textsc{Cao}, Huai-Song ; \textsc{Hamilton}, Richard  ; \textsc{Ilmanen}, Tom:
\newblock Gaussian densities and stability for some {R}icci solitons.
\newblock   (2004). --
\newblock arXiv:math/0404165

\bibitem[Dai07]{Dai07}
\textsc{Dai}, Xianzhe:
\newblock Stability of {E}instein {M}etrics and {S}pin {S}tructures.
\newblock {In: }\emph{Proceedings of the 4th International Congress of Chinese
  Mathematicians} Vol II (2007), 59--72

\bibitem[DWW05]{DWW05}
\textsc{Dai}, Xianzhe ; \textsc{Wang}, Xiaodong  ; \textsc{Wei}, Guofang:
\newblock {On the stability of Riemannian manifold with parallel spinors.}
\newblock {In: }\emph{Invent. Math.} \textbf{161} (2005), no. 1, 151--176

\bibitem[DWW07]{DWW07}
\textsc{Dai}, Xianzhe ; \textsc{Wang}, Xiaodong  ; \textsc{Wei}, Guofang:
\newblock {On the variational stability of K\"ahler-Einstein metrics.}
\newblock {In: }\emph{Commun. Anal. Geom.} \textbf{15} (2007), no. 4, 669--693

\bibitem[GH02]{GH02}
\textsc{Gibbons}, Gary~W. ; \textsc{Hartnoll}, Sean~A.:
\newblock Gravitational instability in higher dimensions.
\newblock {In: }\emph{Phys. Rev. D.} \textbf{66} (2002), no. 6

\bibitem[GHP03]{GHP03}
\textsc{Gibbons}, Gary~W. ; \textsc{Hartnoll}, Sean~A.  ; \textsc{Pope},
  Christopher~N.:
\newblock Bohm and {E}instein-{S}asaki {M}etrics, {B}lack {H}oles, and
  {C}osmological {E}vent {H}orizons.
\newblock {In: }\emph{Phys. Rev. D} \textbf{67} (2003), no. 8

\bibitem[GL99]{GLeB99}
\textsc{Gursky}, Matthew~J. ; \textsc{LeBrun}, Claude:
\newblock {On Einstein manifolds of positive sectional curvature.}
\newblock {In: }\emph{Ann. Global Anal. Geom.} \textbf{17} (1999), no. 4, 315--328

\bibitem[GM02]{GM02}
\textsc{Gubser}, Steven~S. ; \textsc{Mitra}, Intrajit:
\newblock Some interesting violations of the {B}reitenlohner-{F}reedman bound.
\newblock {In: }\emph{J. High Energy Phys}  (2002), no. 7

\bibitem[GPY82]{GPY82}
\textsc{Gross}, David~J. ; \textsc{Perry}, Malcolm~J.  ; \textsc{Yaffe},
  Laurence~G.:
\newblock Instability of flat space at finite temperature.
\newblock {In: }\emph{Phys. Rev. D} \textbf{25} (1982), no. 2, 330--355

\bibitem[Has12]{Has12}
\textsc{Haslhofer}, Robert:
\newblock {Perelman's lambda-functional and the stability of Ricci-flat
  metrics.}
\newblock {In: }\emph{Calc. Var. Partial Differ. Equ.} \textbf{45} (2012), no. 3-4,
  481--504

\bibitem[Hil15]{Hil15}
\textsc{Hilbert}, David:
\newblock {Die Grundlagen der Physik. (Erste Mitteilung.).}
\newblock {In: }\emph{G\"ott. Nachr.}  (1915), 395--407

\bibitem[IK04]{IK04}
\textsc{Itoh}, Mitsuhiro ; \textsc{Kobayashi}, Daisuke:
\newblock {Isolation theorems of the Bochner curvature type tensors.}
\newblock {In: }\emph{Tokyo J. Math.} \textbf{27} (2004), no. 1, 227--237

\bibitem[IN05]{IN05}
\textsc{Itoh}, Mitsuhiro ; \textsc{Nakagawa}, Tomomi:
\newblock {Variational stability and local rigidity of Einstein metrics.}
\newblock {In: }\emph{Yokohama Math. J.} \textbf{51} (2005), no. 2, 103--115

\bibitem[IS02]{ItSa02}
\textsc{Itoh}, Mitsuhiro ; \textsc{Satoh}, Hiroyasu:
\newblock {Isolation of the Weyl conformal tensor for Einstein manifolds.}
\newblock {In: }\emph{Proc. Japan Acad., Ser. A} \textbf{78} (2002), no. 7, 140--142

\bibitem[Koi78]{Koi78}
\textsc{Koiso}, Norihito:
\newblock {Non-deformability of Einstein metrics.}
\newblock {In: }\emph{Osaka J. Math.} \textbf{15} (1978), 419--433

\bibitem[Koi79]{Koi79}
\textsc{Koiso}, Norihito:
\newblock {On the second derivative of the total scalar curvature.}
\newblock {In: }\emph{Osaka J. Math.} \textbf{16} (1979), 413--421

\bibitem[Koi80]{Koi80}
\textsc{Koiso}, Norihito:
\newblock {Rigidity and stability of Einstein metrics - The case of compact
  symmetric spaces.}
\newblock {In: }\emph{Osaka J. Math.} \textbf{17} (1980), 51--73

\bibitem[Koi82]{Koi82}
\textsc{Koiso}, Norihito:
\newblock {Rigidity and infinitesimal deformability of Einstein metrics.}
\newblock {In: }\emph{Osaka J. Math.} \textbf{19} (1982), 643--668

\bibitem[Koi83]{Koi83}
\textsc{Koiso}, Norihito:
\newblock {Einstein metrics and complex structures.}
\newblock {In: }\emph{Invent. Math.} \textbf{73} (1983), 71--106

\bibitem[KW75]{KW75}
\textsc{Kazdan}, Jerry~L. ; \textsc{Warner}, Frank~W.:
\newblock \emph{{Prescribing curvatures.}}
\newblock Differ. Geom., Proc. Symp. Pure Math. 27, Part 2, Stanford, 1975

\bibitem[LeB99]{LeB99}
\textsc{LeBrun}, Claude:
\newblock \emph{{Einstein metrics and the Yamabe problem.}}
\newblock Alexiades, Vasilios (ed.) et al., Trends in mathematical physics.
  Proceedings of the conference, University of Tennessee, Knoxville, TN, USA,
  October 14--17, 1998. Providence, RI: American Mathematical Society. AMS/IP
  Stud. Adv. Math. 13, 353-376., 1999

\bibitem[Mut74]{Mut74}
\textsc{Muto}, Yosio:
\newblock On Einstein metrics.
\newblock {In: }\emph{Journal of Differential geometry} \textbf{9} (1974), no. 4,
  521--530

\bibitem[Oba62]{Ob62}
\textsc{Obata}, Morio:
\newblock {Certain conditions for a Riemannian manifold to be isometric with a
  sphere.}
\newblock {In: }\emph{J. Math. Soc. Japan} \textbf{14} (1962), 333--340

\bibitem[PP84a]{PP84b}
\textsc{Page}, Don~N. ; \textsc{Pope}, Christopher~N.:
\newblock Stability analysis of compactifications of {D} = 11 supergravity with
  {SU}(3)$\times${SU}(2)$\times${U}(1) symmetry.
\newblock {In: }\emph{Phys. Lett.} \textbf{145} (1984), no. 5, 337--341

\bibitem[PP84b]{PP84a}
\textsc{Page}, Don~N. ; \textsc{Pope}, Christopher~N.:
\newblock Which compactifications of {D} = 11 supergravity are stable?
\newblock {In: }\emph{Phys. Lett.} \textbf{144} (1984), no. 5-6, 346--350

\bibitem[Sak71]{Sak71}
\textsc{Sakai}, Takashi:
\newblock {On eigen-values of Laplacian and curvature of Riemannian manifold.}
\newblock {In: }\emph{Tohoku Math. J., II. Ser.} \textbf{23} (1971), 589--603

\bibitem[Ses06]{Ses06}
\textsc{Sesum}, Natasa:
\newblock {Linear and dynamical stability of Ricci-flat metrics.}
\newblock {In: }\emph{Duke Math. J.} \textbf{133} (2006), no. 1, 1--26

\bibitem[Wan91]{Wan91}
\textsc{Wang}, McKenzie~Y.:
\newblock {Preserving parallel spinors under metric deformations.}
\newblock {In: }\emph{Indiana Univ. Math. J.} \textbf{40} (1991), no. 3, 815--844

\bibitem[Ye93]{Ye93}
\textsc{Ye}, Rugang:
\newblock {Ricci flow, Einstein metrics and space forms.}
\newblock {In: }\emph{Trans. Am. Math. Soc.} \textbf{338} (1993), no. 2, 871--896

\bibitem[Zhu00]{Zhu00}
\textsc{Zhu}, Chenchang:
\newblock The Gauss-Bonnet Theorem and its Applications.
\newblock {In: }\emph{University of California, Berkeley, USA}  (2000)

\end{thebibliography}
\end{document}